\DeclareMathOperator*{\argmin}{arg\,min}
\begin{document}

\newtheorem{theorem}{Theorem}
\newtheorem{lemma}{Lemma}
\newtheorem{corollary}{Corollary}
\newtheorem{proposition}{Proposition}
\theoremstyle{definition}
\newtheorem{dfn}{Definition}
\newtheorem{ex}{Example}
\renewcommand{\phi}{\varphi}
\renewcommand{\epsilon}{\varepsilon}

\title[Degrees of freedom for nonlinear least squares]{Degrees of freedom for nonlinear least squares estimation}
\author[N. R. Hansen]{Niels Richard Hansen} 
\address{Department of Mathematical Sciences,
University of Copenhagen,
Universitetsparken 5,
2100 Copenhagen \O,
Denmark}
\email[Corresponding author]{Niels.R.Hansen@math.ku.dk}

\author[A. Sokol]{Alexander Sokol}
\email{alexander@math.ku.dk}

\subjclass[2010]{62J02, 62J07}

\keywords{degrees of freedom, metric projection, nonlinear least
  squares, SURE}

\begin{abstract}
  We give a general result on the effective degrees of freedom for
  nonlinear least squares estimation. It relates the degrees of
  freedom to the divergence of the estimator. We show that in a 
  general framework, the divergence of the least squares estimator 
  is a well defined but potentially negatively biased estimate of the 
  degrees of freedom,  and we give an exact representation of the
  bias. This implies that if we use the divergence as a plug-in
  estimate of the degrees of freedom in Stein's unbiased risk estimate
  (SURE), we generally underestimate the true risk. Our result
  applies, for instance, to model searching problems, yielding a finite sample
  characterization of how much the search contributes to the
  degrees of freedom.  Motivated by the problem of fitting
  ODE models in systems biology, the general results are illustrated by 
  the estimation of systems of linear ODEs. In this example  
  the divergence turns out to be a useful estimate of degrees of
  freedom for $\ell_1$-constrained models.
\end{abstract}

\maketitle

\noindent

\section{Introduction}

The concept of effective degrees of freedom for least squares
estimation in a mean value model is a classical and well studied concept,
which is intimately related to and useful for model assessment and selection, see
e.g. \cite{Hastie:1990gam}, \cite{Ye:1998}, \cite{Efron:2004}. The
more recent interest in the concept has focused on the computation and estimation of
degrees of freedom for non-smoothly penalized or constrained
mean value models. The case of $\ell_1$-penalized least squares estimation in
\emph{linear} models has recieved considerable attention, and
\cite{Tibshirani:2012} provide the most complete results. Convexity has been
pivotal for these recent theoretical developments. In the constrained formulation the mean value model itself must
be convex, and in the penalized formulation the results rely on duality theory
from convex optimization. As we argue below, there are important
applications in systems biology where the mean value models are
inherently nonlinear and non-convex. Realistic models are complex and multivariate,
and the amount of data is limited, so asymptotic arguments are
difficult to justify. Thus for the development of 
appropriate small sample methods for model assessment, a detailed understanding of the effective
degrees of freedom is very useful. We give results on the effective degrees of freedom for
the completely general case where the mean value model is a closed set
and the mean is estimated by least squares. We show
that the classical estimator of the degrees of freedom -- the divergence of the mean
value estimator -- is always well defined but generally biased. The
bias arise from the non-convex geometry of the mean value model, and we
show how the non-convexity is encoded into a Radon measure, and how this
measure gives an explicit formula for the bias.  

Our main motivation for considering non-convex mean value models is 
for estimation of continuous time dynamical models 
from experimental data as is encountered in systems biology, see e.g. \cite{Wilkinson:2006},
\cite{Montefusco:2011} or \cite{Oates:2012}. Multivariate ODE models
constitute an important model class in this area. Despite the many existing
approaches in the literature, data driven estimation and selection of
a multivariate continuous time dynamical model remains a non-trivial
problem. The challenges include the development of methods that scale
well with the dimension of the model, as well as feasible methods to honestly assess the statistical
uncertainty and to avoid overfitting. Through several 
approximations within the continuous time dynamical models, \cite{Oates:2012} managed
to recast aspects of the estimation problem (estimation of the network) in a unifying 
framework relying on the linear model. Though this allowed
for the use of a range of regularization or model selection methods
for the linear model, the conclusion was that ``biological network
inference remains profoundly challenging''. In addition, they observed that 
experimental designs with uneven sampling intervals represented particular
difficulties. We believe that one of the difficulties lies in the
approximations within the continuous time models, which become
particularly pronounced for large sampling intervals. To overcome this
problem and avoid the approximations, we need to consider estimation of
the continuous time models directly, which inevitably leads to
nonlinear mean value models.

We suggest that the challenges in systems biology outlined above may be approached by
non-smooth regularization methods for estimation of parameters in
multivariate ODE models. For this reason we consider 
$\ell_1$-constrained nonlinear least squares
estimation as a main example in the present paper. Our theoretical results do, however,
apply to the general class of least squares
estimators that are given by a possibly non-convex constraint on the
mean value. 
Notably, they apply to estimators obtained by model searching.

In the remaining part of this introduction we describe the general
setup in more details, and we outline the contributions of the
paper. The objective is the assessment of the risk of 
nonlinear least squares estimators using Stein's unbiased risk estimate (SURE), 
as treated in e.g. \cite{Efron:2004}. SURE provides a
non-asymptotic and unbiased estimate of the risk for general mean value
estimators, if we can estimate the effective degrees of
freedom unbiasedly. This was considered in \cite{Meyer:2000} and \cite{Kato:2009} for the
projection onto a closed convex set, and in \cite{Efron:2004b},
\cite{Zou:2007} and \cite{Tibshirani:2012} for $\ell_1$-penalized 
least squares estimation. Unbiased estimation of the effective degrees
of freedom relies on Stein's lemma, which does not
hold in general -- as we will show -- for nonlinear least squares
estimation. Our main result, Theorem \ref{thm:main}, 
is a generalization of Stein's lemma.

We consider the setup where $\mathbf{Y} \sim
\mathcal{N}(\boldsymbol{\xi}, \sigma^2 I_n)$, $\boldsymbol{\xi} \in \mathbb{R}^n$ and $\sigma^2
> 0$. The objective is to estimate $\boldsymbol{\xi}$. With $K \subseteq \mathbb{R}^n$ a
nonempty closed set, and
\begin{equation} \label{eq:metricproj}
\mathrm{pr}(\mathbf{y}) \in \argmin_{\mathbf{x} \in K} ||\mathbf{y} - \mathbf{x}||^2_2
\end{equation}
denoting a point that minimizes the Euclidean distance from $\mathbf{y} \in
\mathbb{R}^n$ to $K$, we estimate $\boldsymbol{\xi}$ by $\mathrm{pr}(\mathbf{Y})$. We do not require that
$\boldsymbol{\xi}$ belongs to $K$. The map $\mathrm{pr} :
\mathbb{R}^n \to \mathbb{R}^n$ defined by (\ref{eq:metricproj}) is known as the metric projection onto
$K$. Though it may not be uniquely defined everywhere, it is, in fact,
Lebesgue almost everywhere unique. For the purpose of this
introduction we assume that a (Borel measurable) selection has been made on
the Lebesgue null set where the metric projection is not unique. 

We may think of $K$ as the image of a
parametrization, that is, for a map $\zeta : \mathbb{R}^p \to
\mathbb{R}^n$ and a closed set $\Theta \subseteq \mathbb{R}^p$ it
holds that
\begin{equation} \label{eq:par}
K = \zeta(\Theta).
\end{equation}
The setup thus
includes most linear and nonlinear regression models, and the
estimator $\mathrm{pr}(\mathbf{Y})$ is the least squares estimator. Moreover,
by taking parameter sets of the form 
$$\Theta = \{ \beta \in \mathbb{R}^p \mid J(\beta) \leq s\}$$
for $s \geq 0$ and some function $J : \mathbb{R}^p \to [0, \infty)$,
the setup includes many regularization methods in their constrained formulation, see Figure \ref{fig:imageball}. If $\zeta$ is
continuous and $\Theta$ is bounded in addition to being closed, then
$K$ is compact and thus automatically closed. The assumption that $K$ is closed is
the only regularity assumption we require for the general results to
hold. Note, in particular, that $K$ is not assumed convex as in \cite{Meyer:2000}
and \cite{Kato:2009}. For convex $K$, the metric projection is
Lipschitz, which implies that Stein's lemma holds. The novelty of our results is that they
apply without a convexity assumption on $K$.  

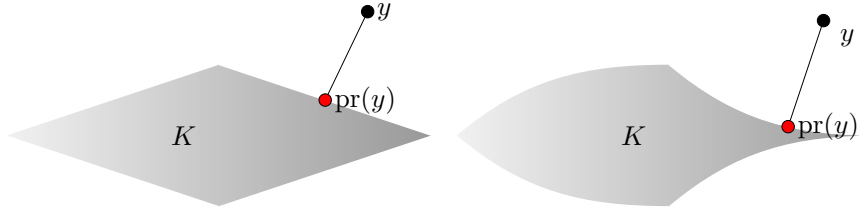
\begin{figure}
\begin{center}
\begin{tabular}{cc}
\begin{tikzpicture} [scale = 0.5]

\shade[left color=gray!10,right color=gray!80] 
  (0,0) -- (6,-2) -- (12,0) -- (6,2) -- cycle;

\draw[color=white] 
  (12,0) -- 
  coordinate[pos=.5] (cux1) (6,2);

\draw (cux1){}+(1.2,2.5) -- (cux1);
\draw[fill=black] (cux1){}+(1.2,2.5) circle (5pt);
\draw[fill=red] (cux1) circle (5pt);
\node at (5, 0) {$K$};
\node[anchor=west] at (cux1) {$\mathrm{pr}(y)$};
\node[anchor=west] at ($(cux1)+(1.2,2.5)$) {$y$};

\end{tikzpicture} &

\begin{tikzpicture} [scale = 0.5]

\shade[left color=gray!10,right color=gray!80] 
  (0,0) to[out=-40,in=-180] (6,-2) to[out=40,in=-180]  (12,0)
  to[out=-180,in=-40] (6,2)  
  to[out=180,in=40] (0,0);

\draw[color=white] 
  (12,0) to[out=-180,in=-40] 
  coordinate[pos=.4] (cux1) (6,2);

\draw (cux1){}+(1,3) -- (cux1);
\draw[fill=black] (cux1){}+(1,3) circle (5pt);
\draw[fill=red] (cux1) circle (5pt);
\node at (5, 0) {$K$};
\node[anchor=west] at (cux1) {$\mathrm{pr}(y)$};
\node[anchor=west] at ($(cux1)+(1.2,2.5)$) {$y$};

\end{tikzpicture}
\end{tabular}
\end{center}
\caption{Illustration of a metric projection in $\mathbb{R}^n$ onto the image of an
  $\ell_1$-ball using a linear (left) or a nonlinear (right)
  parametrization. \label{fig:imageball}}
\end{figure}

With $$\mathrm{Risk} = E||\boldsymbol{\xi} - \mathrm{pr}(\mathbf{Y})||^2_2$$
denoting the risk of the estimator, it is well known that 
\begin{equation} \label{eq:riskid}
\mathrm{Risk} = E ||\mathbf{Y} - \mathrm{pr} (\mathbf{Y})||^2_2 - n \sigma^2 + 2
\sigma^2 \mathrm{df}
\end{equation}
where 
\begin{equation} \label{eq:dfdf}
\mathrm{df} = \frac{1}{\sigma^2}\sum_{i=1}^n \mathrm{cov}(Y_i,
\mathrm{pr}_i(\mathbf{Y})).
\end{equation}
See e.g. \cite{Tibshirani:2012}, \cite{Efron:2004} and \cite{Ye:1998}.

It turns out that the metric projection is Lebesgue almost
everywhere differentiable, see Section \ref{sec:df}, and we can
therefore introduce the Stein degrees of freedom as
$$\mathrm{df}_S = E(\nabla \cdot \mathrm{pr} (\mathbf{Y}))$$
with  $\nabla \cdot \mathrm{pr} = \sum_{i=1}^n \partial_i \mathrm{pr}_i$ denoting the divergence
of $\mathrm{pr}$. As mentioned above, if $\mathrm{pr}$
is almost differentiable, Lemma 2 (Stein's lemma) in \cite{Stein:1981}
implies that
$$\mathrm{df} = \mathrm{df}_S.$$
However, differentiability Lebesgue almost everywhere does not imply almost
differentiability, and Theorem \ref{thm:main} in
Section \ref{sec:df} gives that in general
\begin{equation} \label{eq:dfdiff}
\mathrm{df} - \mathrm{df}_S \geq 0.
\end{equation}

Theorem \ref{thm:main} also gives a characterization of $\mathrm{df} -
\mathrm{df}_S$, whose size is
closely related to the distance from $\boldsymbol{\xi}$ to points where
the metric projection is non-differentiable, and the ``magnitude'' of
the non-differentiability -- see also the discussion in Section
\ref{sec:dis}. This ``magnitude'' is in turn related to the
non-convexity of $K$, and our result is to the best of our
knowledge the first result that characterizes how
non-convexity affects the degrees of freedom, and hence the risk of 
the least squares estimator. The non-convexity of $K$ is basically
unavoidable when we consider parametrized models with a nonlinear 
parametrization $\zeta$, and it is also pivotal for dealing with
model search problems. A typical model search problem falls
within our setup by taking $K$ to be a finite union of closed sets (the
union of the different models). The
prime example is best subset selection in linear regression, which
corresponds
to $K$ being a union of subspaces. We give a more detailed treatment of 
a special case of best subset selection in Example \ref{ex:bestsubset}
and make some remarks about the general case after this example. 

It follows from (\ref{eq:riskid}) and (\ref{eq:dfdiff}) that the risk estimate 
\begin{equation} \label{eq:riskest}
\widehat{\mathrm{Risk}} = ||\mathbf{Y} - \mathrm{pr} (\mathbf{Y})||^2_2 - n \sigma^2 + 2 \sigma^2 \nabla \cdot \mathrm{pr} (\mathbf{Y})
\end{equation}
is negatively biased in general -- systematically underestimating the true
risk. Whether we can estimate or bound this bias is still an open
problem, but our characterization of $\mathrm{df} -
\mathrm{df}_S$ in Theorem \ref{thm:main} provides a way to attack this
problem. In Section \ref{sec:ode} we present the results of using 
(\ref{eq:riskest}) in the context of $\ell_1$-constrained estimation and model searching
for dynamical systems modeled using linear ODEs. To compute
$\widehat{\mathrm{Risk}}$ we need formulas for the computation of the
divergence $\nabla \cdot \mathrm{pr}$, and we give two such results 
in Section \ref{sec:nonlinear} when $K$ is given by
(\ref{eq:par}) -- with some additional regularity assumptions on the
parametrization $\zeta$.

\section{Degrees of freedom for the metric projection} \label{sec:df}

In this section we present the main general results on differentiability of
the metric projection, and how the divergence is related to the
degrees of freedom. This gives a characterization of the bias
of $\nabla \cdot \mathrm{pr}$ as an estimate of $\mathrm{df}$ in cases where the metric projection does not satisfy a
sufficiently strong differentiability condition. The proofs are given
in Section \ref{sec:proofs}.

\begin{dfn} \label{dfn:dif} With $D \subseteq \mathbb{R}^n$ we say
  that a function $f : D \to  \mathbb{R}^n$ is differentiable in $y
  \in D$ in the extended sense if there is a neighborhood $N$ of
  $y$ such that $D^c \cap N$ is a Lebesgue null set and 
$$f(\mathbf{x}) = f(\mathbf{y}) + A(\mathbf{x} - \mathbf{y}) + o(||\mathbf{x} - \mathbf{y}||_2)$$
for $\mathbf{x} \in D \cap N$ and a matrix $A$.
\end{dfn}

If $f$ is differentiable in $\mathbf{y}$ in the extended sense the matrix $A$,
depending on $\mathbf{y}$, is necessarily unique by denseness of $D \cap N$ in $N$. We
define the partial derivatives -- and thus the
divergence -- of $f$ in $\mathbf{y}$ in terms of $A$ by
$$\partial_j f_i(\mathbf{y}) = A_{ij}$$
for $i,j = 1, \ldots, n$. Note that the partial derivatives of $f$ in
$\mathbf{y}$ need not exist in the classical sense if $f$ is differentiable in
$\mathbf{y}$ in the extended sense, but if they do, they coincide with $A_{ij}$.   

\begin{theorem} \label{thm:prdif} There exists a Borel measurable choice of the
  metric projection as a map $\mathrm{pr} :
  \mathbb{R}^n \to \mathbb{R}^n$ with the property that 
$$\mathrm{pr}(\mathbf{y}) \in \argmin_{\mathbf{x} \in K} ||\mathbf{y} - \mathbf{x}||^2_2$$
for all $\mathbf{y} \in \mathbb{R}^n$. Moreover, $\mathrm{pr}(\mathbf{y})$, is
  uniquely defined and differentiable in the extended sense for
  Lebesgue almost all $\mathbf{y}$ with $\partial_i \mathrm{pr}_i(\mathbf{y}) \geq 0$ for $i = 1,
  \ldots, n$.
\end{theorem}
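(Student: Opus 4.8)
The plan is to reduce the entire statement to the convexity of a single auxiliary function, to which Alexandrov's theorem can be applied; the measurability claim I would dispose of separately and first.

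For the measurable selection I would work with the distance function $d_K(\mathbf{y}) = \inf_{\mathbf{x} \in K} ||\mathbf{y} - \mathbf{x}||_2$, which is $1$-Lipschitz and hence continuous. Since $K$ is nonempty and closed, the minimiser set $\Pi(\mathbf{y}) = \{\mathbf{x} \in K : ||\mathbf{y} - \mathbf{x}||_2 = d_K(\mathbf{y})\}$ is nonempty and closed for every $\mathbf{y}$, and its graph $\{(\mathbf{y},\mathbf{x}) : \mathbf{x} \in \Pi(\mathbf{y})\}$ is closed in $\mathbb{R}^n \times \mathbb{R}^n$ by continuity of $d_K$ and closedness of $K$. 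A closed-valued correspondence with closed (hence Borel) graph is weakly measurable and admits a Borel selection by the Kuratowski--Ryll-Nardzewski selection theorem, producing the everywhere-defined map $\mathrm{pr}$ (an explicit alternative is lexicographic minimisation over the coordinates).

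The heart of the argument is the identity
$$g(\mathbf{y}) = ||\mathbf{y}||_2^2 - d_K(\mathbf{y})^2 = \sup_{\mathbf{x} \in K}\left( 2\langle \mathbf{y}, \mathbf{x}\rangle - ||\mathbf{x}||_2^2\right).$$
The right-hand side is a pointwise supremum of affine functions of $\mathbf{y}$, so $g$ is finite and convex on $\mathbb{R}^n$, hence locally Lipschitz and differentiable Lebesgue almost everywhere. At any point $\mathbf{y}$ where $g$ is differentiable, the subdifferential $\partial g(\mathbf{y})$ is the singleton $\{\nabla g(\mathbf{y})\}$; since $\partial g(\mathbf{y})$ contains $2\mathbf{x}$ for every maximiser $\mathbf{x} \in \Pi(\mathbf{y})$, the maximiser — equivalently the metric projection, as maximising $2\langle \mathbf{y},\mathbf{x}\rangle - ||\mathbf{x}||_2^2$ is minimising $||\mathbf{y}-\mathbf{x}||_2^2$ — is unique there and satisfies $\mathrm{pr}(\mathbf{y}) = \tfrac{1}{2}\nabla g(\mathbf{y})$. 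This gives almost-everywhere uniqueness and identifies $\mathrm{pr}$ with half the gradient of a convex function on a full-measure set $D$. I would then invoke Alexandrov's theorem: for almost every $\mathbf{y}$ there is a symmetric positive semidefinite matrix $H$ with $\nabla g(\mathbf{z}) = \nabla g(\mathbf{y}) + H(\mathbf{z} - \mathbf{y}) + o(||\mathbf{z} - \mathbf{y}||_2)$ as $\mathbf{z} \to \mathbf{y}$ along $D$. Setting $A = \tfrac{1}{2}H$, this is precisely differentiability of $\mathrm{pr}$ in the extended sense of Definition \ref{dfn:dif}, with $D^c \cap N$ null for every neighbourhood $N$ because $D$ has full measure; and $H \succeq 0$ forces $\partial_i \mathrm{pr}_i(\mathbf{y}) = A_{ii} = \tfrac{1}{2}H_{ii} \geq 0$.

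The main obstacle I anticipate is matching Alexandrov's theorem to Definition \ref{dfn:dif}: the classical statement is a second-order Taylor expansion of $g$, and one must pass to the first-order expansion of the gradient map restricted to differentiability points and verify that the exceptional set is genuinely Lebesgue null, so that the neighbourhood condition ``$D^c \cap N$ null'' holds. A secondary technical point is the selection step, where one should confirm weak measurability of $\Pi$ from its closed graph before the selection theorem applies.
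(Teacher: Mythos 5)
Your proposal is correct and follows essentially the same route as the paper: your function $g(\mathbf{y}) = ||\mathbf{y}||_2^2 - d_K(\mathbf{y})^2$ is exactly twice the paper's convex function $\rho$ from (\ref{eq:rhodf}), the identification $\mathrm{pr}(\mathbf{y}) = \tfrac{1}{2}\nabla g(\mathbf{y})$ at differentiability points is the paper's subgradient argument, and the conclusion is drawn from the same formulation of Alexandrov's theorem (Theorem \ref{thm:alexandrov}). The only cosmetic difference is the measurable-selection step, where you invoke Kuratowski--Ryll-Nardzewski via the closed graph while the paper cites the equivalent machinery of Rockafellar and Wets (outer semicontinuity plus Corollary 14.6).
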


As a consequence of Theorem \ref{thm:prdif}, $\mathrm{pr}(\mathbf{Y})$ is
uniquely defined with probability 1, and it follows from the
triangle inequality that 
$$||\mathrm{pr}(\mathbf{Y})||_2 \leq ||\mathrm{pr}(\mathbf{0})||_2 + 2
||\mathbf{Y}||_2.$$
This shows, in particular, that $\mathrm{pr}_i(\mathbf{Y})$ has finite second moment. Moreover, 
Theorem \ref{thm:prdif} gives that the divergence $\nabla \cdot
\mathrm{pr} (\mathbf{Y})$ is well defined and positive with probability 1.
These considerations ensure that the following definition is
meaningful. 

\begin{dfn} The degrees of freedom for the metric projection as an
  estimator of $\boldsymbol{\xi}$ is defined as 
\begin{equation} \label{eq:covdef}
\mathrm{df} = \frac{1}{\sigma^2} \sum_{i=1}^{n}
\mathrm{cov}(Y_i, \mathrm{pr}_i(\mathbf{Y})),
\end{equation}
and the Stein degrees of freedom is defined as 
\begin{equation} \label{eq:steindef}
\mathrm{df}_S = E( \nabla \cdot \mathrm{pr} (\mathbf{Y})).
\end{equation}
\end{dfn}

Our next result gives the general relation between $\mathrm{df}$ and
$\mathrm{df}_S$. To this end, let 
$$\psi(\mathbf{y}; \boldsymbol{\xi}, \sigma^2) =  \frac{1}{(2 \pi \sigma^2)^{n/2}} e^{-\frac{||\mathbf{y} -
    \boldsymbol{\xi}||_2^2}{2 \sigma^2}}$$
denote the density for the distribution of $\mathbf{Y}$ -- the multivariate
normal distribution with mean vector $\boldsymbol{\xi}$ and covariance matrix $\sigma^2
I_n$.  

\begin{theorem} \label{thm:main} There exists a Radon measure $\nu$,
  singular w.r.t. the Lebesgue measure, such that 
\begin{equation} \label{eq:main}
\mathrm{df} = \mathrm{df}_S + \int_{\mathbb{R}^n} \psi(\mathbf{y}; \boldsymbol{\xi}, \sigma^2)
\nu(\mathrm{d} \mathbf{y}).
\end{equation}
\end{theorem}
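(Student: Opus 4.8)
The plan is to exhibit the metric projection as the gradient of a convex function, which turns the whole statement into a single integration-by-parts identity in which the singular part of a distributional Hessian produces the measure $\nu$. First I would observe that, writing $d(\mathbf{y},K) = \min_{\mathbf{x}\in K}\|\mathbf{y}-\mathbf{x}\|_2$,
\[
h(\mathbf{y}) = \tfrac12\|\mathbf{y}\|_2^2 - \tfrac12 d(\mathbf{y},K)^2 = \sup_{\mathbf{x}\in K}\Big(\langle \mathbf{y},\mathbf{x}\rangle - \tfrac12\|\mathbf{x}\|_2^2\Big),
\]
is a supremum of affine functions of $\mathbf{y}$, hence convex. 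Since the supremum is attained exactly at the minimizers of $\|\mathbf{y}-\mathbf{x}\|_2^2$, the maximizer, when unique, is $\mathrm{pr}(\mathbf{y})$, and for the convex function $h$ the gradient equals the unique active slope. Thus $\nabla h = \mathrm{pr}$ at every point of differentiability of $h$, that is, Lebesgue-almost everywhere by convexity together with Theorem \ref{thm:prdif}.

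Next I would exploit the convexity of $h$. It is classical that the distributional Jacobian of the monotone map $\nabla h$, namely the distributional Hessian $D^2 h$, is a symmetric, positive semidefinite, matrix-valued Radon measure. In particular each diagonal entry $\partial_i \mathrm{pr}_i = (D^2 h)_{ii}$ is a \emph{nonnegative} Radon measure, which already explains the sign statement in Theorem \ref{thm:prdif}. I would then Lebesgue-decompose $(D^2 h)_{ii} = g_i\,\mathrm{d}\mathbf{y} + \nu_i$ into its absolutely continuous part, with density $g_i \geq 0$, and its singular part $\nu_i \geq 0$. Theorem \ref{thm:prdif} identifies $g_i$ almost everywhere with the extended-sense partial derivative $\partial_i \mathrm{pr}_i$, since the approximate derivative of a function of locally bounded variation coincides with the density of the absolutely continuous part of its distributional derivative and with the classical derivative where the latter exists; hence $\sum_i g_i = \nabla \cdot \mathrm{pr}$ almost everywhere.

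The computation then runs as follows. Using $\partial_{y_i}\psi = -\sigma^{-2}(y_i-\xi_i)\psi$ in the definition \eqref{eq:covdef},
\[
\mathrm{cov}(Y_i,\mathrm{pr}_i(\mathbf{Y})) = \int (y_i-\xi_i)\,\mathrm{pr}_i(\mathbf{y})\,\psi\,\mathrm{d}\mathbf{y} = -\sigma^2\int \mathrm{pr}_i(\mathbf{y})\,\partial_{y_i}\psi\,\mathrm{d}\mathbf{y}.
\]
Integrating by parts in the $i$-th variable, moving the derivative onto $\mathrm{pr}_i$ as the Radon measure $(D^2h)_{ii}$, gives $\mathrm{cov}(Y_i,\mathrm{pr}_i(\mathbf{Y})) = \sigma^2\int \psi\,\mathrm{d}(D^2 h)_{ii}$. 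Dividing by $\sigma^2$, summing over $i$, and inserting the Lebesgue decomposition yields
\[
\mathrm{df} = \sum_{i=1}^n\int \psi\,g_i\,\mathrm{d}\mathbf{y} + \sum_{i=1}^n\int \psi\,\mathrm{d}\nu_i = E(\nabla\cdot\mathrm{pr}(\mathbf{Y})) + \int \psi\,\mathrm{d}\nu = \mathrm{df}_S + \int \psi\,\mathrm{d}\nu,
\]
where $\nu := \sum_{i=1}^n \nu_i$ is a nonnegative Radon measure, singular with respect to Lebesgue measure as a finite sum of such. Nonnegativity of $\nu$ moreover recovers \eqref{eq:dfdiff}.

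The main obstacle is justifying the integration by parts against the non-compactly-supported density $\psi$ while simultaneously ensuring $\int \psi\,\mathrm{d}\nu < \infty$. I would handle this by a cutoff argument: apply the defining identity $\int \mathrm{pr}_i\,\partial_{y_i}\phi\,\mathrm{d}\mathbf{y} = -\int \phi\,\mathrm{d}(D^2h)_{ii}$ to $\phi = \psi\chi_R$ for smooth cutoffs $\chi_R$ supported on balls of radius $R$, and let $R\to\infty$. The cutoff error terms vanish because $\psi$ and $\partial_{y_i}\psi$ decay faster than any polynomial, whereas $\mathrm{pr}_i$ grows at most linearly by the triangle-inequality bound recorded after Theorem \ref{thm:prdif}. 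That same linear bound, together with the monotonicity of $y_i\mapsto\mathrm{pr}_i(\mathbf{y})$ guaranteed by $\partial_i\mathrm{pr}_i\geq 0$, forces the mass of $(D^2h)_{ii}$ on $[-R,R]^n$ to grow only polynomially in $R$, so that the Gaussian weight makes $\int\psi\,\mathrm{d}(D^2h)_{ii}$ finite and legitimizes the limit by dominated convergence. Converting these growth estimates into the vanishing of the error terms is the crux; once that is done, the identification of $\nu$ with the sum of singular parts of the distributional Hessian is immediate.
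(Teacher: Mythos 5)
Your proposal is correct and follows essentially the same route as the paper: your $h$ is exactly the convex function $\rho$ of (\ref{eq:rhodf}), and the argument via the positive semidefinite distributional Hessian, its Lebesgue decomposition, the identification of the absolutely continuous part with the pointwise (approximate) derivative from Theorem \ref{thm:prdif}, and Gaussian integration by parts with a cutoff is precisely what the paper carries out in Lemma \ref{lem:bv} and the proof of Theorem \ref{thm:main}. The technical point you flag as the crux -- extending the partial integration identity from $C_c^\infty$ to $\psi$ and securing $\int\psi\,\mathrm{d}\nu<\infty$ -- is handled in the paper by testing against $(1+\|\mathbf{y}\|_2^2)^{-N}\kappa(r\mathbf{y})$ and using monotone and dominated convergence, which is the same device you describe.
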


The complete proof is given in Section \ref{sec:proofs}, but let us
explain the main ideas. Introducing the convex function 
\begin{equation} \label{eq:rhodf}
\rho(\mathbf{y}) = \sup_{\mathbf{x} \in K} \{ \mathbf{y}^T \mathbf{x} - ||\mathbf{x}||^2/2\},
\end{equation}
the metric projection is a subgradient of $\rho$. The proof of Theorem
\ref{thm:main} amounts to a computation of the second order
distributional derivative of $\rho$. Convexity of $\rho$
implies that the second order distributional derivatives in the coordinate
directions are represented by positive measures, whence the partial distributional derivative of
$\mathrm{pr}_i$ in the $i$'th direction is represented by a positive
measure. Partial integration based on the definition (\ref{eq:covdef}) 
gives a representation of $\mathrm{df}$ in terms of these
partial distributional derivatives. Furthermore, the $i$'th 
partial distributional derivative of $\mathrm{pr}_i$ has, as a measure, Lebesgue decomposition 
$$\partial_i \mathrm{pr}_i \cdot m_n + \nu_i,$$
where $m_n$ denotes the Lebesgue measure on $\mathbb{R}^n$ and $\nu_i
\perp m_n$. The measure $\nu$ that appears in Theorem \ref{thm:main} is given as $\nu = \sum_{i=1}^n
\nu_i$. Note that $\nu$ depends only on the closed set $K$, and is, in particular, independent
of $\boldsymbol{\xi}$ and $\sigma^2$. 

\begin{figure}
\begin{center}
\begin{tabular}{cc}
\begin{tikzpicture}[scale = 0.5]

\draw[->,  >=stealth'] (-5,0) -- (5,0) node[right] {$y_1$}; 
\draw[->,  >=stealth'] (0,-5) -- (0,5) node[above] {$y_2$}; 

\draw[very thick, color=red] (-4.5,0) -- (4.5,0); 
\draw[very thick, color=red] (0,-4.5) -- (0,4.5); 

\draw[very thick, color=blue] (-4,-4) -- (-0.1,-0.1); 
\draw[very thick, color=blue] (0.1,0.1) -- (4,4); 
\draw[very thick, color=blue] (4,-4) -- (0.1,-0.1); 
\draw[very thick, color=blue] (-0.1,0.1) -- (-4,4);

\draw (3, 2) -- (3, 0);
\draw[fill=black] (3, 2) circle (5pt);
\draw[fill=red] (3,0) circle (5pt);
\node[anchor=north] at (3, 0) {$\mathrm{pr}(y)$};
\node[anchor=west] at (3,2) {$y$};

\draw (2, 3) -- (0, 3);
\draw[fill=black] (2, 3) circle (5pt);
\draw[fill=red] (0, 3) circle (5pt);
\node[anchor=east] at (0, 3) {$\mathrm{pr}(y)$};
\node[anchor=south] at (2, 3) {$y$};

\end{tikzpicture} & 
\includegraphics[width=0.5\textwidth]{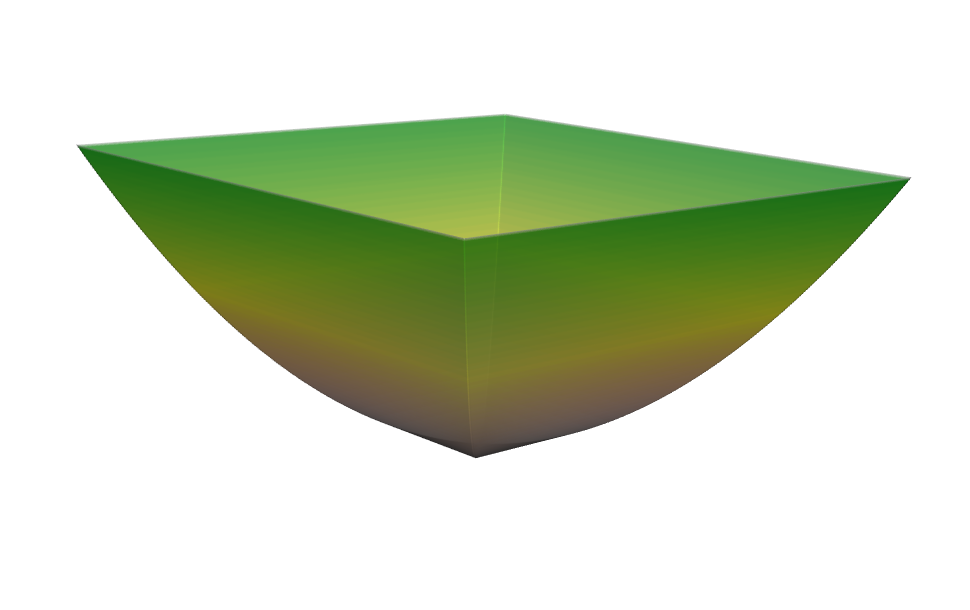} 
\end{tabular}
\end{center}
\caption{Left: The set $K$ from Example \ref{ex:bestsubset} is the union of the coordinate axes (red). The
  metric projection is the projection onto the closest coordinate
  axis. The exoskeleton of $K$ (blue) is the set of points $y = (y_1,
  y_2) \neq (0,0)$ with either $y_1 = y_2$ or $y_1 = -y_2$ for which
  the metric projection is not unique. The
  closure of the exoskeleton equals in this example the support of the
  singular measure $\nu$. Right: The convex function $\rho$ whose subgradient
  field contains the metric projection onto $K$.
  }
\end{figure}

To illustrate the general Theorem \ref{thm:main} we give a detailed
treatment of the case where $K$ is the union of two orthogonal
one-dimensional subspaces.  

\begin{ex} \label{ex:bestsubset} We consider the case $n = 2$, $\boldsymbol{\xi} = 0$, $\sigma^2 = 1$ 
and 
$$K = \{(y_1,y_2)\in\mathbb{R}^2\mid y_2 = 0\}
        \cup \{(y_1,y_2)\in \mathbb{R}^2\mid y_1 = 0\}$$
is the union of the two orthogonal subspaces formed by the first and
second coordinate axis. If we introduce the sets
$$I(z) = (-\infty, -|z|) \cup (|z|, \infty)$$ 
for $z \in \mathbb{R}$, we can for $y_1 \neq y_2$ write the metric projection as  
$$\mathrm{pr}(y_1, y_2) = (y_1 1_{I(y_2)}(y_1), y_2 1_{I(y_1)}(y_2)).$$
When $y_1 \neq y_2$ we find that 
$$\partial_1 \mathrm{pr}_1(\mathbf{y}) + \partial_2
\mathrm{pr}_2(\mathbf{y}) = 1_{I(y_2)}(y_1)  + 1_{I(y_1)}(y_2) = 1,$$
and $\mathrm{df}_S = 1$.  
To compute the singular measure $\nu$ we find, using Fubini's theorem and standard partial integration, that for $\phi \in C_c^1(\mathbb{R}^2)$,
\begin{eqnarray*}
\int_{\mathbb{R}^2}  \mathrm{pr}_1(\mathbf{y}) \partial_1 \phi(\mathbf{y}) \,
m_2(\mathrm{d}\mathbf{y}) & = & \int_{\mathbb{R}} \int_{I(y_2)} y_1 \partial_1 \phi(y_1,
y_2) \, \mathrm{d}y_1 \mathrm{d}y_2 \\
& = & - \int_{\mathbb{R}} |y_2|(\phi(-|y_2|, y_2) + \phi(|y_2|, y_2)) \mathrm{d}y_2 \\
&& \hspace{5mm} - \underbrace{\int_{\mathbb{R}} \int_{I(y_2)} \phi(y_1, y_2) \, \mathrm{d}y_1
  \mathrm{d}y_2}_{\int_{\mathbb{R}^2} \partial_1 \mathrm{pr}_1(y) \phi(y) \,
  \mathrm{d}m_2(y)}.
\end{eqnarray*}
This shows that the singular part of the distributional partial derivative of
$\mathrm{pr}_1(\mathbf{y})$ w.r.t. $y_1$ 
is the measure $\nu_1$ determined by  
$$\int_{\mathbb{R}^2} \phi(\mathbf{y}) \nu_1(\mathrm{d} \mathbf{y}) =  
 \int_{\mathbb{R}} |z| (\phi(|z|, z) + \phi(-|z|,z)) \mathrm{d} z.$$
The singular measure $\nu_2$ is determined likewise, and $\nu = \nu_1
+ \nu_2$ is given by  
$$\int_{\mathbb{R}^2} \phi(\mathbf{y}) \nu(\mathrm{d}\mathbf{y}) =  
 \int_{\mathbb{R}} |z| (\phi(|z|, z) + \phi(-|z|,z) + \phi(z,|z|) + \phi(z, -|z|))
 \mathrm{d} z.$$
By choosing positive functions $\phi_n \in C_c^1(\mathbb{R}^2)$ such
that $\phi_n(\mathbf{y}) \nearrow \psi(\mathbf{y}; \mathbf{0}, 1)$ for $n \to \infty$, it follows that 
$$\int_{\mathbb{R}^2} \psi(\mathbf{y}; \mathbf{0}, 1) \nu( \mathrm{d} \mathbf{y}) = \frac{2}{\pi} \int_{\mathbb{R}} |r| e^{-r^2}
\mathrm{d} r = \frac{2}{\pi} \int_0^{\infty} e^{-r} \mathrm{d} r =
\frac{2}{\pi}.$$
We find that the degrees of freedom for the selection among the two one-dimensional
orthogonal projections becomes
$$\mathrm{df} = 1 + \frac{2}{\pi} = 1.6366.$$
In this particular case it follows directly from the covariance
definition (\ref{eq:covdef}) that  
$$\mathrm{df} = E (\max\{X_1, X_2\})$$
where $X_1$ and $X_2$ are independent $\chi^2_1$-distributed random
variables. This concurs with findings in \cite{Ye:1998} on generalized
degrees of freedom. The numerical value could in this case also be computed by computing
the density of $\max\{X_1, X_2\}$, and use this to compute the
expectation $E (\max\{X_1, X_2\})$. 
\end{ex}

The example above corresponds to best subset selection in linear regression
with two orthogonal predictors. If we consider the general problem of 
best subset selection among subsets with $p$ linearly independent predictors we may note
that $\mathrm{df}_S = p$. Recently, \cite{Tibshirani:2014}
derived in the context of best subset selection an expression for 
$\mathrm{df} - p$ for orthogonal predictors, and developed some
generalizations of Stein's lemma as well. He coined
the term ``search degrees of freedom'' for the difference $\mathrm{df}
- p$, as this difference in the context of best subset selection
explicitly accounts for the contribution to the degrees of freedom
coming from the model search. A straightforward
consequence of our Theorem \ref{thm:main} is that the search degrees of
freedom is, in fact, always positive. A fact that is intuitively
reasonable -- and observable in applications and simulation studies
-- but it has to the best of our knowledge not been established
rigorously before. Though it may not be trivial, we expect that 
the measure $\nu$ can be computed for best subset selection in
general. This promises further insights into the costs that model
searching has on the degrees of freedom and ultimately the risk of the
estimator. 

As noted in the introduction, the risk estimate,
$\widehat{\mathrm{Risk}}$, given by (\ref{eq:riskid})
underestimates the true risk whenever $\mathrm{df} > \mathrm{df}_S$. An explicit representation of
the bias follows directly from Theorem \ref{thm:main}:
$$ E (\widehat{\mathrm{Risk}}) = \mathrm{Risk}
-  2  \sigma^2 \int_{\mathbb{R}^n} \psi(\mathbf{y}; \boldsymbol{\xi}, \sigma^2)
\nu(\mathrm{d} \mathbf{y}).$$
We observe that $\widehat{\mathrm{Risk}}$ is unbiased if and only if
the measure $\nu$ is the null measure. To control the size of the bias 
it may be useful to be able to bound the support of the singular
measure $\nu$. To this end we introduce the set of points with a non-unique metric
projection onto $K$. We call it the exoskeleton of $K$, following
the terminology in \cite{Hug:2004}, and we write 
$$\mathrm{exo}(K) = \left\{\mathbf{y} \in \mathbb{R}^n \, \middle| \, \argmin_{\mathbf{x} \in K} ||\mathbf{y} -
\mathbf{x}||_2^2 \textrm{ is not a singleton}\right\}.$$  
This set is also called the skeleton of the open set $K^{c}$ in
\cite{Fremlin:1997}.  Theorem \ref{thm:prdif} implies that 
$\mathrm{exo}(K)$ is a Lebesgue null set, but more is known. Theorem 1G
in \cite{Fremlin:1997} gives, for instance, that $\mathrm{exo}(K)$ has
Hausdorff dimension at most $n-1$. It should be noted that
there can be points in $K \backslash \mathrm{exo}(K)$ where
$\mathrm{pr}$ is not differentiable. We can then show the 
following proposition.

\begin{proposition} \label{prop:supp} If 
$$\mathrm{pr} : \mathbb{R}^n \backslash \overline{\mathrm{exo}(K)} \to
K$$ 
is locally Lipschitz, and in
  particular if it is $C^1$, then $\mathrm{supp}(\nu)
  \subseteq \overline{\mathrm{exo}(K)}$.
\end{proposition}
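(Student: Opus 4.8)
The plan is to localise on the open set $U = \mathbb{R}^n \setminus \overline{\mathrm{exo}(K)}$ and to show that each singular measure $\nu_i$ assigns no mass to $U$. Recall from the discussion following Theorem \ref{thm:main} that $\nu_i$ is the singular part in the Lebesgue decomposition
$$\mu_i := \partial_i \mathrm{pr}_i = (\partial_i \mathrm{pr}_i)\, m_n + \nu_i$$
of the $i$'th distributional partial derivative of $\mathrm{pr}_i$, which is a positive Radon measure by convexity of $\rho$ (on the left $\partial_i \mathrm{pr}_i$ denotes the distribution, on the right the $m_n$-density). Since distributional differentiation is local, the restriction $\mu_i|_U$ coincides with the distributional partial derivative of the restricted function $\mathrm{pr}_i|_U$.

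The key step is to argue that $\mu_i|_U \ll m_n$. By hypothesis $\mathrm{pr}|_U$, and hence each component $\mathrm{pr}_i|_U$, is locally Lipschitz (this covers the $C^1$ case, as $C^1$ maps are locally Lipschitz). A locally Lipschitz function on an open set lies in $W^{1,\infty}_{\mathrm{loc}}(U)$, and by Rademacher's theorem together with the standard characterisation of $W^{1,\infty}$ its distributional partial derivatives are represented by its classical partial derivatives, which exist $m_n$-almost everywhere and lie in $L^\infty_{\mathrm{loc}}(U)$. Thus $\mu_i|_U$ is given by an $L^\infty_{\mathrm{loc}}$ density against $m_n$, so it is absolutely continuous.

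I now combine this with the global decomposition. Restricting to $U$ yields $\mu_i|_U = (\partial_i \mathrm{pr}_i)\, m_n|_U + \nu_i|_U$, whence $\nu_i|_U = \mu_i|_U - (\partial_i \mathrm{pr}_i)\, m_n|_U$ is a difference of two absolutely continuous measures and is therefore absolutely continuous. On the other hand, the restriction of a singular measure to a Borel set is again singular, so $\nu_i|_U \perp m_n$. A measure that is simultaneously absolutely continuous and singular with respect to $m_n$ is the null measure, so $\nu_i(U) = 0$. As $U$ is open, this forces $\mathrm{supp}(\nu_i) \subseteq U^c = \overline{\mathrm{exo}(K)}$. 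Finally $\nu = \sum_{i=1}^n \nu_i$ is a sum of nonnegative measures each supported in the closed set $\overline{\mathrm{exo}(K)}$, so $\mathrm{supp}(\nu) \subseteq \overline{\mathrm{exo}(K)}$.

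The main obstacle is conceptual rather than computational: one must be sure that local Lipschitz continuity genuinely annihilates the singular part. This rests on the locality of distributional differentiation (so that $\mu_i|_U$ is the derivative of $\mathrm{pr}_i|_U$) and on the uniqueness of the Lebesgue decomposition under restriction. It is precisely here that mere $m_n$-almost everywhere differentiability off the exoskeleton, which already follows from Theorem \ref{thm:prdif}, is insufficient: pointwise a.e. differentiability is compatible with a nonzero singular derivative, and the stronger Sobolev-type regularity supplied by local Lipschitz continuity is exactly what is needed to rule it out.
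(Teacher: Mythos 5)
Your proof is correct and follows essentially the same route as the paper: both arguments use that local Lipschitz continuity on $U=\mathbb{R}^n\setminus\overline{\mathrm{exo}(K)}$ forces the distributional partial derivatives of $\mathrm{pr}_i$ restricted to $U$ to be absolutely continuous (the paper cites the Sobolev characterisation of Lipschitz functions, you invoke Rademacher plus $W^{1,\infty}_{\mathrm{loc}}$), so the singular parts $\nu_{ii}$ vanish on $U$. If anything, your write-up is more explicit than the paper's about localising the test functions to $U$ and about why the restricted Lebesgue decomposition kills $\nu_i|_U$, which is a point the paper's proof glosses over.
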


If $K$ is convex (in addition to being nonempty and closed) the
metric projection is uniquely defined everywhere and Lipschitz
continuous, see Lemma 1 in \cite{Tibshirani:2012}. Thus $\mathrm{exo}(K) =
\emptyset$ and by Proposition \ref{prop:supp} the measure $\nu$ is the
null measure. From this we get the unbiasedness of $\widehat{\mathrm{Risk}}$ for convex $K$.

\begin{corollary} \label{cor:Kconvex} The measure $\nu$ in Theorem
  \ref{thm:main} is the null measure if $K$ is convex, in which case 
the risk estimate $\widehat{\mathrm{Risk}}$ is unbiased.
\end{corollary}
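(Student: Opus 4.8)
The plan is to reduce the corollary entirely to Proposition~\ref{prop:supp} and Theorem~\ref{thm:main}, by checking that convexity of $K$ makes the exoskeleton empty and the metric projection globally Lipschitz. First I would invoke the classical projection theorem in the Hilbert space $\mathbb{R}^n$: if $K$ is nonempty, closed and convex, then for every $\mathbf{y}$ the problem $\argmin_{\mathbf{x}\in K}||\mathbf{y}-\mathbf{x}||_2^2$ has a \emph{unique} minimizer. This is exactly Lemma~1 in \cite{Tibshirani:2012}, so no new argument is required; the standard route is the parallelogram identity, which forces any minimizing sequence to be Cauchy, combined with convexity to exclude two distinct minimizers. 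By the very definition of the exoskeleton, uniqueness of the minimizer at every point means that $\mathrm{exo}(K)=\emptyset$, and hence $\overline{\mathrm{exo}(K)}=\emptyset$ as well.

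Next I would use the second half of the same classical fact, namely that the metric projection onto a closed convex set is nonexpansive ($1$-Lipschitz) on all of $\mathbb{R}^n$. Since $\overline{\mathrm{exo}(K)}=\emptyset$, the map $\mathrm{pr}:\mathbb{R}^n\setminus\overline{\mathrm{exo}(K)}\to K$ is just $\mathrm{pr}$ on all of $\mathbb{R}^n$, and it is locally Lipschitz there. Thus the hypothesis of Proposition~\ref{prop:supp} holds, and applying that proposition gives $\mathrm{supp}(\nu)\subseteq\overline{\mathrm{exo}(K)}=\emptyset$. A Radon measure with empty support is the null measure, so $\nu=0$.

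Finally, substituting $\nu=0$ into the representation~\eqref{eq:main} of Theorem~\ref{thm:main} yields $\mathrm{df}=\mathrm{df}_S$, and the integral in the bias identity $E(\widehat{\mathrm{Risk}})=\mathrm{Risk}-2\sigma^2\int_{\mathbb{R}^n}\psi(\mathbf{y};\boldsymbol{\xi},\sigma^2)\,\nu(\mathrm{d}\mathbf{y})$ vanishes, so $\widehat{\mathrm{Risk}}$ is unbiased. The genuine content sits in Proposition~\ref{prop:supp}, which I am permitted to assume, so I do not expect a real obstacle here; the remaining steps merely translate the classical convex-projection properties into the exoskeleton language. The one point worth stating carefully is the equivalence between ``the projection is uniquely defined everywhere'' and $\mathrm{exo}(K)=\emptyset$, which is immediate from the definition of $\mathrm{exo}(K)$.
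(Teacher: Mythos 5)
Your proposal is correct and follows essentially the same route as the paper: the paper also deduces $\mathrm{exo}(K)=\emptyset$ and global Lipschitz continuity of the projection from Lemma 1 in \cite{Tibshirani:2012}, applies Proposition \ref{prop:supp} to conclude $\nu=0$, and then reads off unbiasedness from Theorem \ref{thm:main}. No gaps.
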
 

To illustrate the general results further we give two
additional examples. In Example \ref{ex:shrinkage} we consider the projection onto a convex $\ell_2$-ball, which
amounts to a form of $\ell_2$-shrinkage. In Example \ref{ex:3} we
consider the projection onto the $\ell_2$-sphere, which shows some
interesting phenomena in the non-convex case. Example \ref{ex:3} shows, in particular, that $K$
need not be convex for $\nu$ to be the null measure, and thus that the
support of $\nu$ can be a strict subset of
$\overline{\text{exo}(K)}$. 

\begin{ex} \label{ex:shrinkage} Let $K = B(\mathbf{0}, s)$ be the closed $\ell_2$-ball with center $\mathbf{0}$ and
  radius $s \geq 0$. Then 
$$\mathrm{pr}_i(\mathbf{y}) = \left\{\begin{array}{cc} 
\frac{s y_i}{||\mathbf{y}||_2} & \quad \mathrm{if } \ ||\mathbf{y}||_2 > s \\
y_i & \quad \mathrm{if } \ ||\mathbf{y}||_2 \leq s
\end{array}\right.
$$ 
and 
$$\partial_i \mathrm{pr}_i(\mathbf{y}) = \left\{\begin{array}{cc} 
\frac{s}{||\mathbf{y}||_2} - \frac{s y_i^2}{||\mathbf{y}||_2^3} & \quad \mathrm{if } \ ||\mathbf{y}||_2 > s \\
1 & \quad \mathrm{if } \ ||\mathbf{y}||_2 \leq s.
\end{array}\right.
$$ 
Since $K$ is convex 
$$\mathrm{df} = \mathrm{df}_S =  s(n-1) E (||\mathbf{Y}||_2^{-1}1(||\mathbf{Y}||_2 > s)) + n
P(||\mathbf{Y}||_2 \leq s).$$
If $\boldsymbol{\xi} = \mathbf{0}$ the expectation and probability can be expressed in terms of
incomplete $\Gamma$-integrals. The unbiased estimate of $\mathrm{df}$ is 
$$\nabla \cdot \mathrm{pr} (\mathbf{Y})  = \frac{s(n-1)}{||\mathbf{Y}||_2} 1(||\mathbf{Y}||_2 > s) + n 1(||\mathbf{Y}||_2
\leq s).$$
It is interesting to compare the constrained estimator, which for
fixed $s$ projects $\mathbf{Y}$ onto the ball of radius $s$, with the linear  
shrinkage estimator 
$$\frac{1}{1 + \lambda} \mathbf{Y}$$
for a fixed $\lambda \geq 0$. The linear shrinkage estimator coincides with the metric projection onto the
ball with radius 
\begin{equation} \label{eq:slamb}
s = ||\mathbf{Y}||_2/(1 + \lambda) \leq ||\mathbf{Y}||_2.
\end{equation}
It follows directly from (\ref{eq:covdef}) that the linear shrinkage estimator 
has degrees of freedom $n/(1+\lambda)$. 
For the metric projection onto a ball with radius $s$ given by
(\ref{eq:slamb}) the unbiased estimate of the degrees of freedom equals
$$\frac{s(n-1)}{||\mathbf{Y}||_2} = \frac{n-1}{1 + \lambda}.$$ 
This is an unbiased estimate of degrees of freedom for 
a ball with fixed radius $s \geq 0$. The degrees of freedom for the 
linear shrinkage estimator is for fixed $\lambda \geq 0$. The two estimates of degrees of
freedom differ because the relation $s(1+\lambda) = ||\mathbf{Y}||_2$ is
$\mathbf{Y}$-dependent.
\end{ex}

\begin{ex} \label{ex:3} In this example we take $K = S^{n-1}$ to be the $\ell_2$-sphere of radius
  $1$ in $\mathbb{R}^n$, and we take $\sigma^2 = 1$ and $\boldsymbol{\xi} = \mathbf{0}$. Then
  $\mathrm{pr}(\mathbf{y}) = \mathbf{y}/||\mathbf{y}||_2$ for $\mathbf{y} \neq \mathbf{0}$. The metric projection
  is not uniquely defined for $\mathbf{y} = \mathbf{0}$ and $\mathrm{exo}(S^{n-1}) = \{\mathbf{0}\}$. The computation of the
  divergence is as above with 
$$\nabla \cdot \mathrm{pr}(\mathbf{y}) = (n - 1) \frac{1}{||\mathbf{y}||_2}$$
for $\mathbf{y} \neq \mathbf{0}$. Since 
$$||\boldsymbol{\xi} - \mathrm{pr}(\mathbf{y})||_2^2 = \left|\left|
    \frac{\mathbf{y}}{||\mathbf{y}||_2}\right|\right|_2^2 = 1,$$
we find that $\mathrm{Risk} = 1$. Moreover, 
\begin{eqnarray*}
E||\mathbf{Y} - \mathrm{pr}(\mathbf{Y})||_2^2 & = & E \left(||\mathbf{Y}||_2^2 \left(1 -
  \frac{1}{||\mathbf{Y}||_2}\right)^2\right) \\
& = & E||\mathbf{Y}||_2^2 + 1 - 2 E ||\mathbf{Y}||_2 \\
& = & n + 1 -  2 E ||\mathbf{Y}||_2,
\end{eqnarray*}
and it follows that $\mathrm{df} = E ||\mathbf{Y}||_2$. Since $||\mathbf{Y}||_2^2 \sim
\chi^2_n$ straightforward computations give that 
$$E ||\mathbf{Y}||_2 = \frac{\sqrt{2} \Gamma\left(\frac{n +
      1}{2}\right)}{\Gamma\left(\frac{n}{2}\right)},$$
together with 
$$ E \left( \frac{1}{||\mathbf{Y}||_2}\right) = \frac{\Gamma\left(\frac{n -
      1}{2}\right)}{\sqrt{2} \Gamma\left(\frac{n}{2}\right)} = \frac{\sqrt{2} \Gamma\left(\frac{n +
      1}{2}\right)}{(n-1) \Gamma\left(\frac{n}{2}\right)}$$
for $n \geq 2$. This shows that 
$$\mathrm{df} = E ||\mathbf{Y}||_2 = (n-1) E \left( \frac{1}{||\mathbf{Y}||_2}\right) = E (\nabla
\cdot \mathrm{pr}(\mathbf{Y}))$$
for $n \geq 2$, and we conclude that $\nu$ is the null measure for $n \geq
2$.  This is an example where the measure $\nu$ can be
0 in cases where the exoskeleton is nonempty. 

For $n = 1$ we have $\mathrm{df} = E |Y| = \sqrt{\frac{2}{\pi}}$,
whereas $\mathrm{pr}(y) = \mathrm{sign}(y)$ has derivative $0$ for $y
\neq 0$, and thus $\mathrm{df}_S = 0$. It follows from Proposition
\ref{prop:supp} that $\nu = c \delta_0$ (with $\delta_0$ the Dirac
measure in 0) for $c \geq 0$. Since  
$$ \frac{c}{\sqrt{2 \pi}} = c  \psi(0; 0, 1)  = \int_{\mathbb{R}} \psi(y; 0, 1)
\nu(\mathrm{d} y) = 
 \sqrt{\frac{2}{\pi}}$$
we conclude that $\nu = 2 \delta_0$. 
Note that $\nu$ is the distributional derivative
of the sign function.
\end{ex}

\section{Divergence formulas for nonlinear least squares
  regression} \label{sec:nonlinear}

In this section our focus changes from the abstract results concerning
an arbitrary closed set $K$ in $\mathbb{R}^n$ to sets that are given
in terms of a $p$-dimensional parametrization. The main purpose is to provide explicit
formulas for the computation of the divergence  $\nabla \cdot
\mathrm{pr}(\mathbf{y})$ for a given $\mathbf{y} \in \mathbb{R}^n$ in terms of the 
parametrization in two different situations of practical interest. 
Both results follow by implicit differentiation. The complete proofs
are given in Section 2 in the supplementary material. 
 
We assume in this section that  $\zeta : \mathbb{R}^p \to
\mathbb{R}^n$, that $\Theta \subseteq
\mathbb{R}^p$ is a closed set, and that the image $K = \zeta(\Theta)$
is closed. The observation $\mathbf{y} \in \mathbb{R}^n$ is fixed, and we
make the following local regularity assumptions about the
parametrization $\zeta$.
\begin{itemize}
\item The metric projection of $\mathbf{y}$ onto $K$ is unique with 
$\mathrm{pr}(\mathbf{y}) = \zeta(\hat{\beta})$ for $\hat{\beta} \in \Theta$. 
\item The map $\zeta : \mathbb{R}^p \to \mathbb{R}^n$ is $C^2$ in a
  neighborhood of $\hat{\beta}$.
\item The map $\zeta : \Theta \to K$ is open in $\hat{\beta}$, that is, if $V$ is a neighborhood of $\hat{\beta}$ in
  $\mathbb{R}^p$, there is a neighborhood $U$ of $\mathrm{pr}(\mathbf{y})$ in
  $\mathbb{R}^n$ such
  that $$U \cap K \subseteq \zeta(V \cap \Theta).$$
\end{itemize}
The inverse function
theorem implies the last assumption if the derivative of $\zeta$ has
rank $p$ (forcing $p \leq n$) in $\hat{\beta}$. 

We introduce the two $p \times p$ matrices
$G$ and $J$ by
\begin{equation} \label{eq:Gdfn}
G_{kl} = \sum_{i=1}^n \partial_k \zeta_i(\hat{\beta}) \partial_l
\zeta_i(\hat{\beta})
\end{equation}
and 
\begin{equation} \label{eq:Jdfn}
J_{kl} = G_{kl} - \sum_{i=1}^n (y_i -
\zeta_i(\hat{\beta})) \partial_k \partial_l \zeta_i(\hat{\beta}).
\end{equation}
Note that for a linear model where $\zeta(\beta) = \mathbf{X}
\beta$ for an $n \times p$ matrix $\mathbf{X}$, $J = G = \mathbf{X}^T\mathbf{X}$.

\begin{theorem} \label{thm:divsmooth} If $\hat{\beta} \in
  \Theta^{\circ}$ and $J$ has full rank $p$, then 
$$\nabla \cdot \mathrm{pr} (\mathbf{y}) = \mathrm{tr} \left(J^{-1} G \right).$$
\end{theorem}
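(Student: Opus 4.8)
The plan is to realize the metric projection locally as a smooth composition and then differentiate by the chain rule. Since $\hat{\beta} \in \Theta^{\circ}$ and $\zeta(\hat{\beta})$ minimizes $\mathbf{x} \mapsto ||\mathbf{y} - \mathbf{x}||_2^2$ over $K = \zeta(\Theta)$, the parameter $\hat{\beta}$ is an interior local minimizer of $\beta \mapsto ||\mathbf{y} - \zeta(\beta)||_2^2$, so it solves the stationarity (normal) equations
$$\Phi_k(\mathbf{y}, \hat{\beta}) := \sum_{i=1}^n (y_i - \zeta_i(\hat{\beta})) \partial_k \zeta_i(\hat{\beta}) = 0, \qquad k = 1, \ldots, p.$$
First I would differentiate $\Phi$, using that $\zeta$ is $C^2$ near $\hat{\beta}$. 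A direct computation shows that the Jacobian of $\Phi$ in the $\beta$-argument at $(\mathbf{y}, \hat{\beta})$ equals $-J$, with $J$ given by (\ref{eq:Jdfn}), while the Jacobian in the $\mathbf{y}$-argument equals $(D\zeta)^T$, where $D\zeta$ is the $n \times p$ matrix with entries $(D\zeta)_{ik} = \partial_k \zeta_i(\hat{\beta})$; note that $G = (D\zeta)^T D\zeta$ by (\ref{eq:Gdfn}).

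Because $J$ has full rank $p$, the implicit function theorem yields a neighborhood of $\mathbf{y}$ and a unique $C^1$ map $\mathbf{y}' \mapsto \hat{\beta}(\mathbf{y}')$ into a neighborhood of $\hat{\beta}$ with $\hat{\beta}(\mathbf{y}) = \hat{\beta}$ and $\Phi(\mathbf{y}', \hat{\beta}(\mathbf{y}')) = 0$. Implicit differentiation of this identity gives
$$J \, \partial_{\mathbf{y}} \hat{\beta} = (D\zeta)^T, \qquad \text{hence} \qquad \partial_{\mathbf{y}} \hat{\beta} = J^{-1} (D\zeta)^T$$
as a $p \times n$ matrix. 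Writing $\widetilde{\mathrm{pr}}(\mathbf{y}') = \zeta(\hat{\beta}(\mathbf{y}'))$ and applying the chain rule, the Jacobian of $\widetilde{\mathrm{pr}}$ at $\mathbf{y}$ is $D\zeta \, J^{-1} (D\zeta)^T$, so that, by the cyclic property of the trace,
$$\nabla \cdot \widetilde{\mathrm{pr}}(\mathbf{y}) = \mathrm{tr}\left(D\zeta \, J^{-1} (D\zeta)^T\right) = \mathrm{tr}\left(J^{-1} (D\zeta)^T D\zeta\right) = \mathrm{tr}\left(J^{-1} G\right).$$

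The hard part will be the last step: showing that $\widetilde{\mathrm{pr}}$ agrees with the true metric projection $\mathrm{pr}$ near $\mathbf{y}$, since the implicit function theorem produces only a stationary point of the distance function, not a priori the global minimizer. This is where the so-far-unused hypotheses enter. I would first establish that $\mathbf{y}' \mapsto \mathrm{pr}(\mathbf{y}')$ is continuous at $\mathbf{y}$ by a standard subsequence argument: if $\mathbf{y}_m \to \mathbf{y}$, then $\mathrm{pr}(\mathbf{y}_m)$ is bounded (by the a priori bound noted after Theorem \ref{thm:prdif}), and any subsequential limit lies in $K$ by closedness and realizes $\mathrm{dist}(\mathbf{y}, K)$ by continuity of the distance, hence equals $\mathrm{pr}(\mathbf{y})$ by uniqueness. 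Consequently, for $\mathbf{y}'$ close to $\mathbf{y}$ the point $\mathrm{pr}(\mathbf{y}')$ lies in an arbitrarily small neighborhood $U$ of $\zeta(\hat{\beta})$; choosing the neighborhood $V$ of $\hat{\beta}$ in the openness assumption so small that $V \subseteq \Theta^{\circ}$ and that $V$ lies inside the uniqueness neighborhood of the implicit function theorem, the inclusion $U \cap K \subseteq \zeta(V \cap \Theta)$ lets me write $\mathrm{pr}(\mathbf{y}') = \zeta(\beta')$ with $\beta' \in V \subseteq \Theta^{\circ}$. Being an interior global minimizer, $\beta'$ solves $\Phi(\mathbf{y}', \beta') = 0$, so by the local uniqueness clause of the implicit function theorem $\beta' = \hat{\beta}(\mathbf{y}')$, giving $\mathrm{pr} = \widetilde{\mathrm{pr}}$ in a neighborhood of $\mathbf{y}$. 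Therefore $\nabla \cdot \mathrm{pr}(\mathbf{y}) = \mathrm{tr}(J^{-1} G)$, which is the claim.
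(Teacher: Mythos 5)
Your proof is correct and follows the route the paper itself indicates (implicit differentiation of the stationarity equations $\Phi(\mathbf{y},\beta)=0$, whose full details the paper defers to the supplementary material): the Jacobian computations giving $-J$ and $(D\zeta)^T$, the chain-rule trace identity, and crucially the identification of the implicit branch with the true metric projection via continuity of $\mathrm{pr}$ at the uniqueness point $\mathbf{y}$ together with the openness assumption on $\zeta$ are all sound. No gaps.
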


Note that under sufficient regularity assumptions, standard asymptotic
arguments, see Sections 2.3 and 2.5 in \cite{Claeskens:2008}, give for
$p$ fixed the expansion
$$||\mathbf{Y} - \mathrm{pr}(\boldsymbol{\xi})||^2_2 = ||\mathbf{Y} - \mathrm{pr}(\mathbf{Y})||^2_2 + Z + 2 \sigma^2 U^T \mathbb{J}^{-1} U + o_P(1)$$
for $n \to \infty$, with $EZ = 0$, $EU = 0$, $VU = \mathbb{G}$,
$$\mathbb{G}_{kl} = \sum_{i=1}^n \partial_{k} \zeta_i(\beta_0) \partial_l
\zeta_i(\beta_0)  \quad \mathrm{and} \quad \mathbb{J}_{kl} =
\mathbb{G}_{kl} - \sum_{i=1}^n (\boldsymbol{\xi}_i - \mathrm{pr}_i(\boldsymbol{\xi})) \partial_k
\partial_l \zeta_i(\beta_0).$$
The parameter $\beta_0$ is defined by $\zeta(\beta_0) =
\mathrm{pr}(\boldsymbol{\xi})$, that is, $\zeta(\beta_0)$ is the point in the model $K
= \zeta(\Theta)$ closest to $\boldsymbol{\xi}$. Defining $p^* = E(U^T \mathbb{J}^{-1}
U) = \mathrm{tr} (\mathbb{J}^{-1} \mathbb{G})$ as the effective number
of parameters, the generalization of AIC to misspecified models, known as Takeuchi's information criterion, becomes
$$\mathrm{TIC} =  ||\mathbf{y} - \mathrm{pr}(\mathbf{y})||^2_2 + 2\sigma^2 p^*.$$
We recognize $J$ and $G$ as plug-in estimates of $\mathbb{J}$ and
$\mathbb{G}$, and thus $\mathrm{tr} \left(J^{-1} G \right)$ as an
estimate of $p^*$. Theorem \ref{thm:divsmooth} identifies this
estimate as the unbiased estimate of the Stein degrees of
freedom. From the asymptotic arguments it does not follow that
$\mathrm{TIC}$ is negatively biased for finite sample sizes, 
but our Theorem \ref{thm:main} reveals that $p^*$ generally needs a
finite sample correction.

We then turn our attention to the case where the parameter set is an
$\ell_1$-constrained subset of $\mathbb{R}^p$. That is, we 
consider parameter sets of the form
$$\Theta_s = \left\{ \beta \in \mathbb{R}^p \;\middle|\; \sum_{k=1}^p
\omega_k |\beta_k|  \leq s\right\}$$
for $s \geq 0$ and $\omega \in \mathbb{R}^p$ a fixed vector of
nonnegative weights. With $\mathrm{pr}(\mathbf{y}) = \zeta(\hat{\beta})$ for
$\hat{\beta} \in \Theta_s$, then $\hat{\beta}$ is typically on the
boundary of $\Theta_s$, and the formula in Theorem \ref{thm:divsmooth}
for the divergence does not apply. Instead we note that $\hat{\beta}$
fulfills the Karush-Kuhn-Tucker conditions 
$$D\zeta(\hat{\beta})^T (\mathbf{y} - \zeta(\hat{\beta})) = \hat{\lambda} \gamma$$
for $\gamma \in \mathbb{R}^p$ with 
$$\begin{array}{ll} \gamma_k = \omega_k \mathrm{sign}(\hat{\beta}_k)
    \quad & \text{if } \ \hat{\beta}_k \neq 0 \\
\gamma_k \in [-\omega_k,\omega_k] \quad & \text{if } \ \hat{\beta}_k = 0 
\end{array} $$
and $\hat{\lambda} \geq 0$ the Lagrange multiplier. We introduce the active set of parameters as 
$$\mathcal{A} = \{ i \mid \hat{\beta}_i \neq 0 \},$$
and let $J_{\mathcal{A}, \mathcal{A}}$ and $G_{\mathcal{A},
  \mathcal{A}}$
denote the submatrices of $J$ and $G$, respectively, with indices
in $\mathcal{A}$. 

\begin{dfn} A solution to the Karush-Kuhn-Tucker conditions is said to
  fulfill the sufficient second order conditions if $\hat{\lambda} > 0$, 
$\gamma_k \in (-\omega_k, \omega_k)$ for $k \not \in \mathcal{A}$ and 
$\delta^T J_{\mathcal{A}, \mathcal{A}} \delta > 0$ for all nonzero $\delta \in
\mathbb{R}^{\mathcal{A}}$ satisfying $\delta^T \gamma_{\mathcal{A}} = 0$.  
\end{dfn}

Note that the sufficient second order conditions imply that a solution to the Karush-Kuhn-Tucker conditions 
is a local minimizer of $||\mathbf{y} - \zeta(\beta)||_2^2$ in
$\Theta_s$. 

\begin{theorem} \label{thm:divlasso} If $J_{\mathcal{A}, \mathcal{A}}$ has full rank
  $|\mathcal{A}|$, if $\gamma_{\mathcal{A}}^T (J_{\mathcal{A}, \mathcal{A}})^{-1}
  \gamma_{\mathcal{A}} \neq 0$ and if $\hat{\beta}$ fulfills the sufficient second order 
  conditions, then
$$\nabla \cdot \mathrm{pr} (\mathbf{y}) = \mathrm{tr} \left((J_{\mathcal{A},
    \mathcal{A}})^{-1}G_{\mathcal{A}, \mathcal{A}}\right) - \frac{ \gamma_{\mathcal{A}}^T (J_{\mathcal{A},
    \mathcal{A}})^{-1}  G_{\mathcal{A}, \mathcal{A}} (J_{\mathcal{A},
    \mathcal{A}})^{-1}  \gamma_{\mathcal{A}}}{\gamma_{\mathcal{A}}^T (J_{\mathcal{A},
    \mathcal{A}})^{-1} \gamma_{\mathcal{A}}}.$$
\end{theorem}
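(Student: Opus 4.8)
The plan is to reduce the constrained problem locally to a smooth square system and apply the implicit function theorem, mirroring the implicit-differentiation strategy behind Theorem~\ref{thm:divsmooth}. Fix $\mathbf{y}$ satisfying the hypotheses, with $\mathrm{pr}(\mathbf{y}) = \zeta(\hat\beta)$, active set $\mathcal{A}$, multiplier $\hat\lambda > 0$ and subgradient $\gamma$. First I would use the sufficient second order conditions to show that the combinatorial structure of the solution is locally rigid: since $\gamma_k \in (-\omega_k,\omega_k)$ strictly for $k \notin \mathcal{A}$ and $\hat\lambda > 0$, for $\mathbf{y}'$ in a neighborhood of $\mathbf{y}$ any nearby Karush--Kuhn--Tucker point must keep the inactive coordinates at zero, keep the constraint active, and keep the signs $\mathrm{sign}(\hat\beta_k)$ for $k \in \mathcal{A}$ unchanged. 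Hence $\gamma_{\mathcal{A}}$ is locally constant and the only free unknowns are $\beta_{\mathcal{A}}$ and $\lambda$.

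On this neighborhood the metric projection is then governed by the reduced system $g(\beta_{\mathcal{A}},\lambda,\mathbf{y}) = D_{\mathcal{A}}\zeta(\beta_{\mathcal{A}})^{T}(\mathbf{y}-\zeta(\beta_{\mathcal{A}})) - \lambda\gamma_{\mathcal{A}} = 0$ together with the active constraint $h = \gamma_{\mathcal{A}}^{T}\beta_{\mathcal{A}} - s = 0$, where $D_{\mathcal{A}}\zeta$ collects the columns of $D\zeta$ indexed by $\mathcal{A}$ and the inactive coordinates are held at zero. Differentiating $g$ in $\beta_{\mathcal{A}}$ is the computation that produces exactly $-J_{\mathcal{A},\mathcal{A}}$: differentiating the residual $(\mathbf{y}-\zeta)$ gives $-G_{\mathcal{A},\mathcal{A}}$, while differentiating $D_{\mathcal{A}}\zeta^{T}$ brings in the second derivatives $\partial_k\partial_l\zeta_i$ weighted by $y_i-\zeta_i(\hat\beta)$, reproducing the definition~\eqref{eq:Jdfn}. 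Writing $B = D_{\mathcal{A}}\zeta(\hat\beta)$, so that $B^{T}B = G_{\mathcal{A},\mathcal{A}}$, the derivative of the system in $\mathbf{y}$ is $(B^{T},\,0)^{T}$, and the Jacobian of the system in the unknowns $(\beta_{\mathcal{A}},\lambda)$ is the bordered matrix
\[
M = \begin{pmatrix} -J_{\mathcal{A},\mathcal{A}} & -\gamma_{\mathcal{A}} \\ \gamma_{\mathcal{A}}^{T} & 0 \end{pmatrix}.
\]

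Invertibility of $M$ is where the two algebraic hypotheses enter. Abbreviating the active-set objects by $J = J_{\mathcal{A},\mathcal{A}}$, $G = G_{\mathcal{A},\mathcal{A}}$ and $\gamma = \gamma_{\mathcal{A}}$, full rank of $J$ lets me form $J^{-1}$, and the Schur complement of the $(1,1)$ block of $M$ equals $-\gamma^{T}J^{-1}\gamma$, which is nonzero precisely by the assumption $\gamma_{\mathcal{A}}^{T}(J_{\mathcal{A},\mathcal{A}})^{-1}\gamma_{\mathcal{A}} \neq 0$; hence $M$ is invertible. The implicit function theorem then yields a $C^1$ branch $\mathbf{y}' \mapsto (\hat\beta_{\mathcal{A}}(\mathbf{y}'),\hat\lambda(\mathbf{y}'))$, and reading the top $|\mathcal{A}|\times n$ block of $-M^{-1}(B^{T},\,0)^{T}$ off the standard block-inverse formula gives
\[
D\hat\beta_{\mathcal{A}} = \Big(J^{-1} - \frac{J^{-1}\gamma\gamma^{T}J^{-1}}{\gamma^{T}J^{-1}\gamma}\Big)B^{T}.
\]
Since $\mathrm{pr}(\mathbf{y}') = \zeta(\hat\beta_{\mathcal{A}}(\mathbf{y}'))$ near $\mathbf{y}$, the chain rule gives $D\mathrm{pr}(\mathbf{y}) = B\,D\hat\beta_{\mathcal{A}}$; taking the trace, using cyclicity together with $B^{T}B = G$, and observing that $\mathrm{tr}(J^{-1}\gamma\gamma^{T}J^{-1}G)$ collapses to the scalar $\gamma^{T}J^{-1}GJ^{-1}\gamma$, yields the claimed formula.

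The main obstacle, which I would treat carefully rather than gloss over, is the first step: showing that the $C^1$ KKT branch furnished by the implicit function theorem genuinely coincides with the metric projection on a full neighborhood of $\mathbf{y}$, so that no coordinate enters or leaves $\mathcal{A}$ and the branch remains the global minimizer. This is what the sufficient second order conditions are for --- the positivity $\delta^{T}J_{\mathcal{A},\mathcal{A}}\delta > 0$ on $\{\delta : \gamma_{\mathcal{A}}^{T}\delta = 0\}$ keeps the branch a strict local minimizer under small perturbations, and strict complementarity keeps the inactive constraints inactive --- but upgrading local minimality to the genuine unique metric projection, and thereby to the classical differentiability of $\mathrm{pr}$ at $\mathbf{y}$ that makes the extended-sense divergence of Definition~\ref{dfn:dif} agree with $\mathrm{tr}(D\mathrm{pr}(\mathbf{y}))$, is the delicate part and relies on the standing uniqueness assumption together with Theorem~\ref{thm:prdif}.
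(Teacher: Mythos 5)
Your proposal is correct and follows essentially the same route the paper indicates for this result, namely implicit differentiation of the reduced KKT system on the active set (the paper defers the details to its supplementary material but states explicitly that Theorem~\ref{thm:divlasso} ``follows by implicit differentiation''). Your bordered-Jacobian computation, the role of the two rank/nondegeneracy hypotheses in inverting $M$ via the Schur complement, and your identification of the delicate step --- matching the local KKT branch to the actual metric projection using the standing uniqueness and openness assumptions --- all check out.
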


First note that $J_{\mathcal{A}, \mathcal{A}}$ has full rank
  $|\mathcal{A}|$ and $\gamma_{\mathcal{A}}^T (J_{\mathcal{A}, \mathcal{A}})^{-1}
  \gamma_{\mathcal{A}} \neq 0$ if $J_{\mathcal{A}, \mathcal{A}}$ is positive
  definite. For the linear model, this is the case when
  $\mathbf{X}_{\cdot, \mathcal{A}}$ has rank $|\mathcal{A}|$. 
Then observe that in the case where $\zeta$ is locally linear around
$\hat{\beta}$ to second order, that is, $\partial_k\partial_l \zeta(\hat{\beta}) = 0$,
we get that $\nabla \cdot \mathrm{pr} (\mathbf{y}) = |\mathcal{A}| -
1$. Previous results in \cite{Zou:2007} and
\cite{Tibshirani:2012} for $\ell_1$-penalized linear regression give
that the unbiased estimate of degrees of freedom is
$|\mathcal{A}|$. The difference arises because we consider the
constrained estimator, and this phenomenon was first observed in
\cite{Kato:2009}. See also Example \ref{ex:shrinkage} for a similar
difference for $\ell_2$-regularization. It is
possible to compute the divergence of the penalized estimator under
conditions similar to those above. The result is
$\mathrm{tr} \left((J_{\mathcal{A}, \mathcal{A}})^{-1}G_{\mathcal{A},
    \mathcal{A}}\right)$ as expected. However, we cannot in an obvious
way relate this quantity to the degrees of freedom of the penalized
nonlinear least squares estimator. Our results hinge crucially on the
fact that the estimator can be expressed in terms of a metric
projection onto a closed set. If the penalized estimator can be given
such a representation, e.g. via dualization as outlined in
\cite{Tibshirani:2012} in the linear case, we might be able to
transfer the results to the penalized estimator, but we expect this to
be difficult without convexity.

\section{Model selection for a $d$-dimensional linear ODE} \label{sec:ode}

In this section we present simulation results on the use of the risk estimate
$\widehat{\mathrm{Risk}}$ based on the divergence in a
nontrivial example of nonlinear regression. The
example considered is estimation of the parameters in a system of
linear ordinary differential equations using an $\ell_1$-constrained 
estimator as well as a model search approach. The main conclusion is 
that the bias of $\widehat{\mathrm{Risk}}$ was
considerable for the model search, while it was negligible for the 
$\ell_1$-constrained estimator. All computations were carried
out using the R package \texttt{smde}, see
\url{http://www.math.ku.dk/~richard/smde/}. 

The observations are $\mathbf{Y}_1, \ldots, \mathbf{Y}_m \in \mathbb{R}^d$ with $\mathbf{Y}_i \sim
\mathcal{N}(\boldsymbol{\xi}_i, \sigma^2 I_d)$ and $\boldsymbol{\xi}_i = e^{t_i B} \mathbf{x}_i$ for
$t_i > 0$, $\mathbf{x}_i \in \mathbb{R}^d$ and $e^{tB}$ denoting the matrix
exponential. It is well known that $t \mapsto e^{tB}\mathbf{x}$ is the solution
of the linear $d$-dimensional ODE
$$\frac{\mathrm{d}}{\mathrm{d} t} f(t) = B f(t)$$
for $t > 0$ with initial condition $f(0) =  \mathbf{x} \in \mathbb{R}^d$.
The unknown parameter is $B \in \mathbb{M}(d, d)$.
We collect the observations into 
$\mathbf{Y} = (\mathbf{Y}_1, \ldots, \mathbf{Y}_m) \in \mathbb{M}(d, m)$,
and we let likewise $\boldsymbol{\xi} = (\boldsymbol{\xi}_1, \ldots, \boldsymbol{\xi}_m)$ denote the collection of
expectations. We will identify the matrices $\mathbf{Y}$ and $\boldsymbol{\xi}$ with
vectors in $\mathbb{R}^n$ for $n = md$, which we denote by $\mathbf{Y}$ and $\boldsymbol{\xi}$
as well (formally, the identification is made by stacking the columns). Thus $\mathbf{Y} \sim
\mathcal{N}(\boldsymbol{\xi}, \sigma^2 I_n)$. We also identify $B$ with a vector in
$\mathbb{R}^p$ where $p = d^2$, and the parametrization $\zeta : \mathbb{R}^p \to
\mathbb{R}^n$ is given as 
\begin{equation} \label{eq:expmean}
\zeta(B) = (e^{t_1B}\mathbf{x}_1, \ldots, e^{t_m B} \mathbf{x}_m).
\end{equation}
We note that the number of observations $n = md$ as well as
the number of parameters $p = d^2$ scale with $d$. For
many applications it may be realistic to achieve a good
model for a sparse $B$. Sparse
estimation of $B$ is generally useful for computational and statistical reasons, and 
it may also be useful for network inference and interpretations.

We will in this paper focus on the special case $t_1 = \ldots = t_m
= t$, which we will refer to as the isochronal model. For the
isochronal model $\zeta(B) = e^{tB}\mathbf{x}$ with $\mathbf{x} = (\mathbf{x}_1, \ldots,
\mathbf{x}_m)$, in which case it is natural to parametrize the model in terms of $A =
e^{tB}$. With $\hat{A}$ an estimator of $A$ we can estimate $B$
as $\hat{B} = \log(\hat{A})/t$ where $\log$ denotes the
principal matrix logarithm. The least squares estimator of $A$ amounts
to ordinary linear least squares regression. We are, however,
interested in obtaining sparse estimates of
$B$. Since the principal matrix logarithm does not preserve sparseness
in general, we will maintain the parametrization in terms of $B$ and
consider the family of $\ell_1$-constrained nonlinear least squares estimators 
$$ \hat{B}_s = \argmin_{B \in \Theta_s} ||\mathbf{Y} - e^{tB} \mathbf{x}||_2^2$$ 
where $\Theta_s = \{B \mid \sum_{kl} \omega_{kl} |B_{kl}| \leq s\}$
for $s \geq 0$ and $\omega \in \mathbb{M}(d,d)$ is a given weight matrix (with
$\omega_{kl} \geq 0$). For technical details on the computation of derivatives
and the implementation of the optimization algorithm see the supplementary material.

\begin{figure}[t]
\begin{center}
\includegraphics[width=0.9\textwidth]{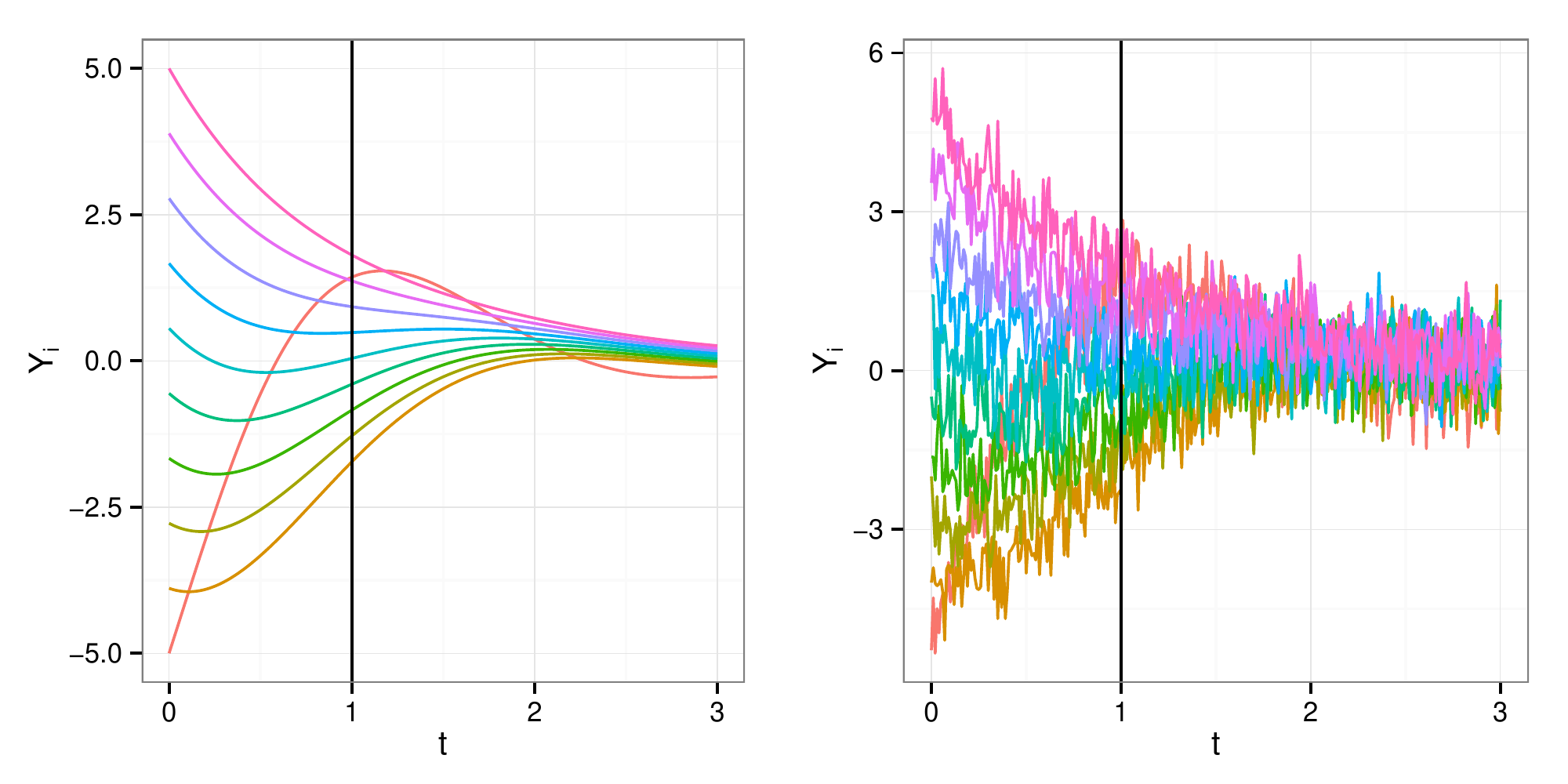}
\end{center}
\caption{Example of the solution of the ODE and a noisy sample path. \label{fig:sim}}
\end{figure}

We did a simulation study with
$t = 1$, $d = 10$, $m = 15$ and $\sigma^2 = 0.25$. The $B$ matrix
is given in the supplementary material, and contains 28 nonzero parameters out of
100. The matrix $B$ was chosen so that $e^B$ is dense and not well approximated
by a sparse matrix. The matrix exponential $e^B$ is, in particular,
not well approximated by the first order Taylor approximation $I_{10} +
B$. A single simulation of the sample paths is shown in Figure \ref{fig:sim}.

The initial conditions were sampled from the
$10$-dimensional normal distribution $\mathcal{N}(0, 16 I_{10})$, and we
used a total of $1000$ replications. For the choice of
weights (the $\omega_{kl}$'s) we considered two situations; either
$\omega_{kl} = 1$, or adaptive weights, as introduced in \cite{Zou:2006b}, based on the MLE, 
$$\omega_{kl} = \frac{1}{|\hat{B}_{kl}|}.$$ 
In this section we only report the results for the unit weights. See
the supplementary material for the results using adaptive weights. 

In the simulation study we computed the $\ell_1$-constrained estimators
$\hat{B}_s$ for a range of values of $s$ and the corresponding estimates
$\widehat{\mathrm{Risk}}(s)$ of the risk based on
(\ref{eq:riskest}). The divergence was computed using 
Theorem \ref{thm:divlasso} based on the formulas in Section 3 in
the supplementary material. 
\begin{figure}[t]
\begin{center}
\includegraphics[width=0.6\textwidth]{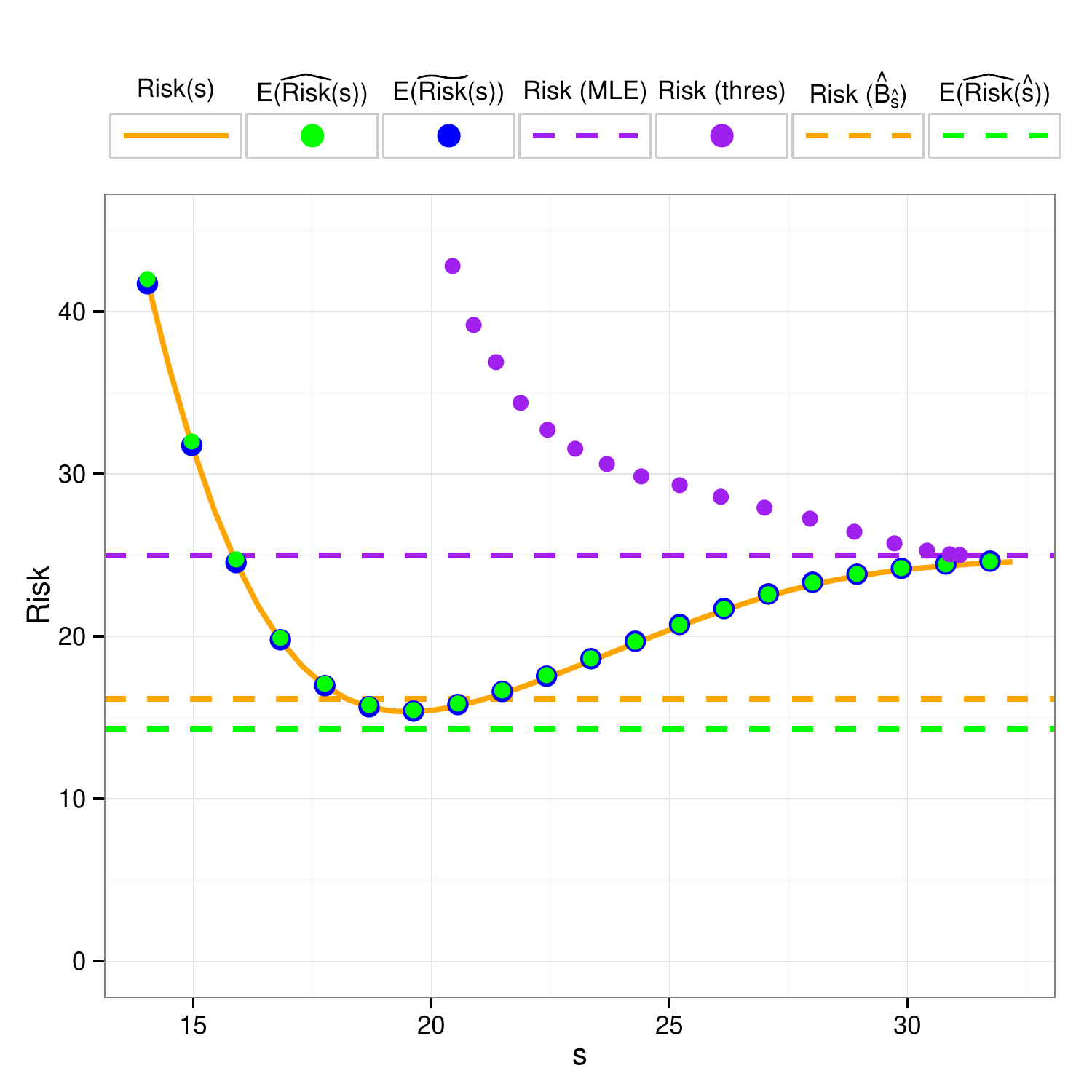} 
\end{center}
\caption{Risks for the $\ell_1$-constrained estimator with unit weights
  as a function of the constraint $s$ compared to the risk of
  the MLE and hard thresholding of the MLE. In addition, expected
  values of risk estimates. The 
risk estimates were practically unbiased for the $\ell_1$-constrained
estimator. \label{fig:risks1}}
\end{figure}
With 
$$\hat{s} = \argmin_s \widehat{\mathrm{Risk}}(s)$$
denoting the data driven optimal estimate of $s$, the resulting estimator of $B$
is $\hat{B}_{\hat{s}}$. In addition, we computed the risk estimate
$$\widetilde{\mathrm{Risk}}(s) = ||\mathbf{Y} - \mathrm{pr} (\mathbf{Y})||^2_2 - n \sigma^2
+ 2 \sigma^2 (|\mathcal{A}| - 1)$$
based on the approximation $\nabla \cdot e^{\hat{B}_s} \mathbf{x} \simeq
|\mathcal{A}| - 1$, see the discussion after Theorem
\ref{thm:divlasso}. We also computed the
MLE as well as a sequence of sparse(r) solutions obtained by hard
thresholding the MLE. The results of the simulation study are
summarized in Figure \ref{fig:risks1}. The risk of the constrained estimator was minimal
around $s = 19.5$. Both risk estimates,
$\widehat{\mathrm{Risk}}(s)$ and $\widetilde{\mathrm{Risk}}(s)$, were, in this case,
very close to being unbiased, and the estimated optimal constrained
$\hat{s}$ gave an estimator $\hat{B}_{\hat{s}}$ with close to minimal
risk. The MLE and the sequence of thresholded MLEs all have larger risks 
than the constrained estimators for a substantial range of $s$, and, 
more importantly, than the risk of $\hat{B}_{\hat{s}}$. We
should note, however, that $\widehat{\mathrm{Risk}}(\hat{s})$ did
on average underestimate the actual risk of $\hat{B}_{\hat{s}}$ a little. 

\begin{figure}[t]
\begin{center}
\includegraphics[width=0.6\textwidth]{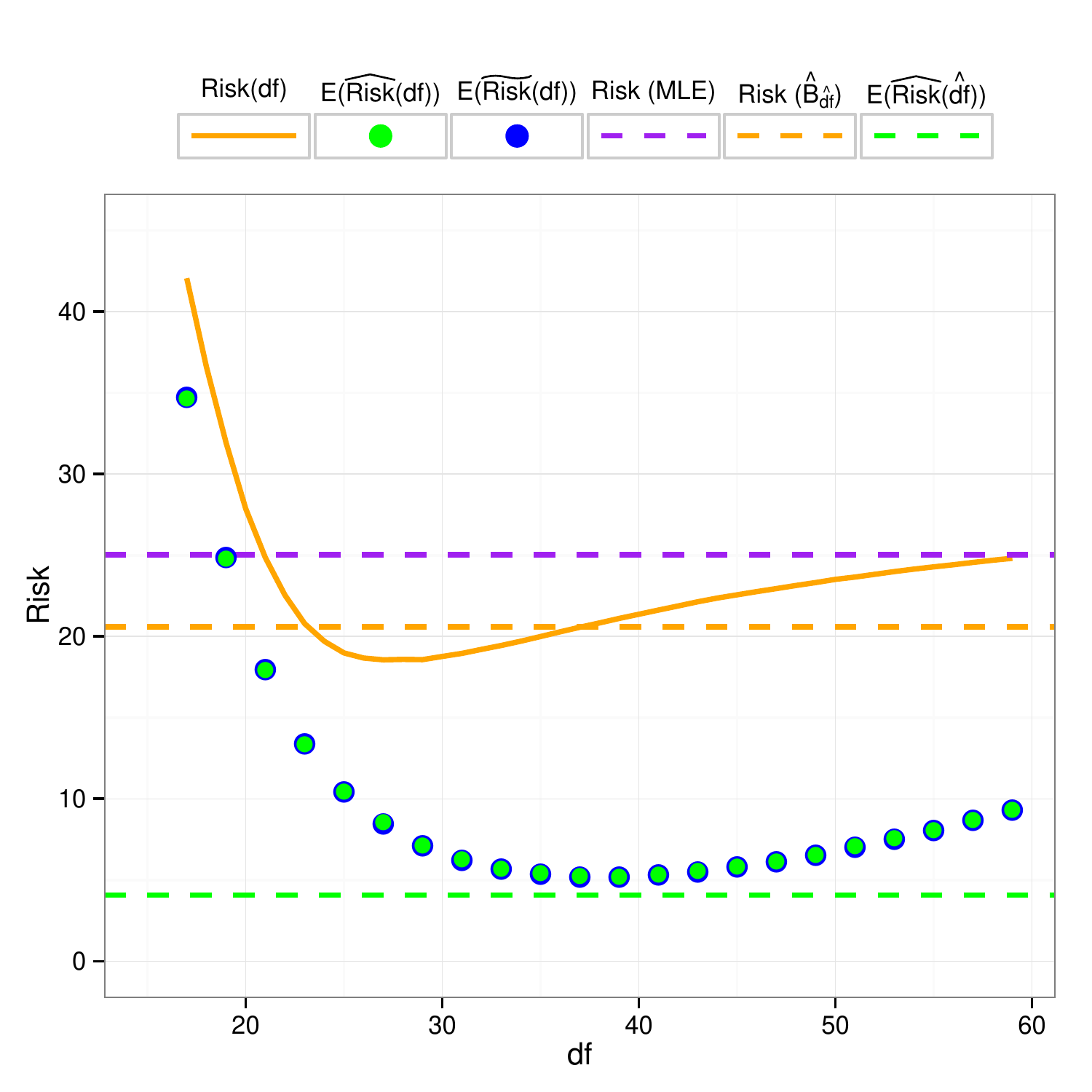} 
\end{center}
\caption{Risks for the forward stepwise model search as a function
  of the number of nonzero parameters compared to the risk of
  the MLE. In addition, expected
  values of risk estimates. The 
risk estimates grossly underestimated the true risk and overestimated
the optimal number of nonzero parameters. \label{fig:risks2}}
\end{figure}

In addition to the $\ell_1$-constrained estimator, we considered
classical model searching. That is, we sought the best fitting model
among all models with a given number of nonzero parameters. A complete
search is computationally prohibitive, so we carried out a forward
stepwise model search. The model search was initiated by a
diagonal matrix, and in each step we added the parameter that
decreased the squared error loss the most. The divergences
 were computed using either Theorem \ref{thm:divsmooth} or
approximated by the number of nonzero parameters. The results are
summarized in Figure \ref{fig:risks2}. We found that the model with
minimal risk had around 28 nonzero parameters. In this case, the risk
estimates underestimated the true risk considerably. Moreover, they 
suggested that models with around 37 nonzero parameters had minimal
risk. Consequently, the data driven choice of the number of nonzero
parameters resulted in too large models with a correspondingly larger
risk. In contrast to the $\ell_1$-constrained case, the integral w.r.t. the
singular measure in Theorem \ref{thm:main} can be detected as a bias for
model searching. This bias cannot be ignored if we want to estimate the risk
satisfactorily.

To understand better the results of the simulation study -- and the
nature of the nonlinear least squares problem -- it would be desirable to
be able visualize the image sets $\exp(\Theta_s)$, or, in particular, the
images $\exp(\partial\Theta_s)$ of the boundaries of $\Theta_s$, for
different choices of $s$. These are the images under the matrix
exponential of the boundaries of $\ell_1$-balls. As these sets
are subsets of $\mathbb{R}^{100}$ a visualization is challenging. Figure
\ref{fig:slices} shows two selected slices of the sets by
affine subspaces. The slices were constructed as follows. With 
 
$$
e^{B(a,b,c,d)} = \left( \begin{array}{ccc}
 a & c & * \\ 
  b & d & * \\ 
 * & * & *
\end{array}\right)
$$ 
it holds that $B(-0.11, 0.19, -0.19, 0.23) = B$  -- the matrix that we used in the
simulation. Fixing either $(b, c) = (0.19, -0.19)$ or $(a,d) = (-0.11, 0.23)$
we get the two affine subspaces considered, which both include
$B$. The slices were computed as
contour curves for \mbox{$(a,d) \mapsto ||B(a, 0.19, -0.19,
d)||_1$} and $(c,d) \mapsto ||B(-0.11, c, d, 0.23)||_1$.

\begin{figure}
\begin{center}
\includegraphics[width=0.9\textwidth]{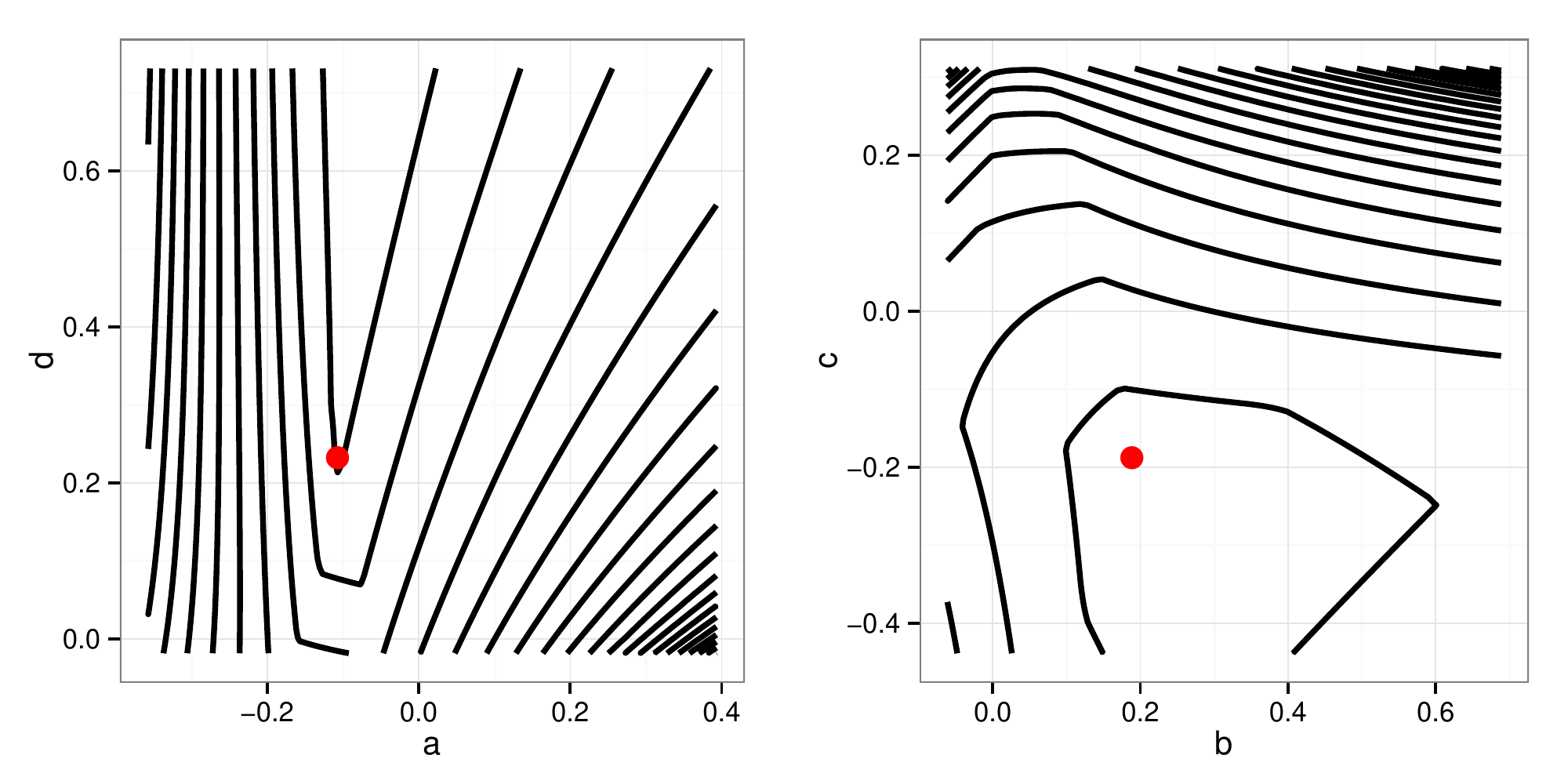}
\end{center}
\caption{Two selected 2-dimensional slices of $\exp(\partial \Theta_s)$
  for different choices of $s$, that is, the intersections of $\exp(\partial \Theta_s)$ in
  $\mathbb{R}^{100}$ with 2-dimensional affine subspaces. The red points
mark the values used in the simulation study. \label{fig:slices}}
\end{figure}

\section{Proofs} \label{sec:proofs} In this section we give the proofs of the results
stated in Sections \ref{sec:df}. Doing so we
will provide a brief account on the ideas and strategies used with
some appropriate references to the literature. A further discussion of how our
results and proofs are related to the literature is given in Section
\ref{sec:dis}. 

The proofs are based on the facts that the function $\rho$ defined by
(\ref{eq:rhodf}) is convex, that its
subgradient in $\mathbf{y}$ contains the points in $K$ closest to
$\mathbf{y}$, and that if $\rho$ is differentiable in $\mathbf{y}$, its
gradient equals the necessarily unique metric projection. That is, 
$\mathrm{pr}(\mathbf{y}) = \nabla \rho (\mathbf{y})$. 
This is all well known, see e.g. Theorem 3 in
\cite{Asplund:1968} for a similar but abstract formulation, or Theorem 3.3 in
\cite{Evans:1987} for an alternative formulation in
$\mathbb{R}^n$. For completeness, Lemma 1 and its proof in the supplementary material
give the details. Central to the proofs of Theorem \ref{thm:prdif} 
and Theorem \ref{thm:main} in Section \ref{sec:df} is a famous theorem of Alexandrov given
first in \cite{Alexandrov:1939}. It loosely states that a
convex function is twice differentiable except perhaps on a
Lebesgue null set. We state a version of Alexandrov's theorem particularly useful
for our purposes, which we will apply to the convex function
$\rho$.  

\begin{theorem} \label{thm:alexandrov}
  Let $g : \mathbb{R}^n \to \mathbb{R}$ be a convex function, and let
  $D \subseteq \mathbb{R}^n$ denote the subset on which $g$ is
  differentiable. For Lebesgue almost all $y$ it holds that $y \in D$
  and there exists a matrix $A$ such that 
\begin{equation} \label{eq:firstorder}
\nabla g (\mathbf{x}) = \nabla g(\mathbf{y}) + A(\mathbf{x} -
\mathbf{y}) + o(||\mathbf{x} - \mathbf{y}||_2)
\end{equation}
for $\mathbf{x} \in D$. The matrix $A$ is symmetric and positive semidefinite and as such
uniquely determined by (\ref{eq:firstorder}). 
\end{theorem}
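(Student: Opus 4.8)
The plan is to derive Alexandrov's theorem from three ingredients: the first-order a.e. differentiability of convex functions, the representation of the distributional Hessian of $g$ as a matrix-valued Radon measure, and a blow-up argument that upgrades measure-theoretic differentiation to the pointwise expansion (\ref{eq:firstorder}).

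First I would recall that a finite convex function on $\mathbb{R}^n$ is locally Lipschitz, so by Rademacher's theorem it is differentiable outside a Lebesgue null set; this gives $m_n(D^c) = 0$ and defines $\nabla g$ almost everywhere. Convexity makes $\nabla g$ a monotone map, $(\nabla g(\mathbf{x}) - \nabla g(\mathbf{y})) \cdot (\mathbf{x} - \mathbf{y}) \geq 0$, and in particular $\nabla g \in BV_{\mathrm{loc}}(\mathbb{R}^n; \mathbb{R}^n)$. Its distributional Jacobian is therefore a matrix-valued Radon measure $M = (M_{ij})$ with $M_{ij} = \partial_j \partial_i g$ in the sense of distributions. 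Two structural properties are key here: $M$ is symmetric, because the mixed second distributional derivatives of the scalar function $g$ coincide, and $M$ is positive semidefinite, in the sense that $\mathbf{v}^T M \mathbf{v}$ is a nonnegative measure for every $\mathbf{v}$, because $t \mapsto g(\mathbf{y} + t\mathbf{v})$ is convex along every line.

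Next I would form the Lebesgue decomposition $M = A(\mathbf{y}) \, m_n(\mathrm{d}\mathbf{y}) + M_s$, where the density $A(\mathbf{y})$ is defined for almost all $\mathbf{y}$ and inherits symmetry and positive semidefiniteness, and $M_s \perp m_n$. By the Besicovitch differentiation theorem, for almost all $\mathbf{y}$ the point $\mathbf{y}$ is simultaneously a Lebesgue point of $A$, a point of vanishing density for the total variation of $M_s$, and a point of differentiability of $g$. Fix such a $\mathbf{y}$; the claim is that this $A(\mathbf{y})$ is the matrix $A$ in (\ref{eq:firstorder}). The uniqueness, together with the stated symmetry and positive semidefiniteness, then follows immediately from the structural properties of $M$ and the denseness of $D$.

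The hard part will be the final step: converting the differentiation of the measure $M$ at $\mathbf{y}$ into the pointwise linearization of the gradient $\nabla g$ at $\mathbf{y}$. Here I would run a blow-up argument, rescaling $g_r(\mathbf{z}) = r^{-2}(g(\mathbf{y} + r\mathbf{z}) - g(\mathbf{y}) - \nabla g(\mathbf{y}) \cdot (r\mathbf{z}))$. Convexity and the local Lipschitz bound provide compactness for the family $\{g_r\}$, while the measure conditions at $\mathbf{y}$ force every subsequential limit to have distributional Hessian equal to the constant matrix $A(\mathbf{y})$, hence to be the quadratic $\mathbf{z} \mapsto \frac{1}{2}\mathbf{z}^T A(\mathbf{y}) \mathbf{z}$. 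Monotonicity of the gradients is what lets me pass from this uniform convergence of the rescaled functions to the convergence of the rescaled gradients $r^{-1}(\nabla g(\mathbf{y} + r\mathbf{z}) - \nabla g(\mathbf{y})) \to A(\mathbf{y}) \mathbf{z}$, which is exactly (\ref{eq:firstorder}). Controlling the contribution of the singular part $M_s$ uniformly on small balls, so that it does not obstruct this convergence, is the delicate quantitative point and the main obstacle; it rests on the vanishing-density property of $M_s$ at $\mathbf{y}$ combined with the monotone structure of $\nabla g$.
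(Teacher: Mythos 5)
This is Alexandrov's theorem, and the paper does not prove it at all: it is stated as a known result and attributed to the literature (Theorems 2.3 and 2.8 of Rockafellar (2000), Theorem 13.51 of Rockafellar and Wets, and Howard's exposition). So any proof you give is by definition a different route from the paper's. The route you sketch is the standard measure-theoretic one (as in Evans--Gariepy, Section 6.4): Rademacher for first-order differentiability, the distributional Hessian of a convex function as a symmetric positive-semidefinite matrix-valued Radon measure, Lebesgue/Besicovitch decomposition and differentiation, and then an upgrade from $L^1$-differentiation of the measure to the pointwise expansion of $\nabla g$. This is a legitimate and self-contained way to obtain exactly the formulation the paper needs (differentiability of $\nabla g$ on its domain $D$ in the extended sense, with $A$ symmetric and PSD inherited from the measure, and uniqueness from denseness of $D$). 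It is worth noting that this machinery is essentially the same machinery the paper then deploys in Lemma 1 and in the proof of Theorem 2 for the specific convex function $\rho$, so your proof is stylistically consonant with the rest of Section 5 even though the paper chose to cite rather than prove.

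There is one concrete soft spot in the sketch: the claim that ``convexity and the local Lipschitz bound provide compactness for the family $\{g_r\}$.'' The local Lipschitz constant of $g$ only gives $|\nabla g_r(\mathbf{z})| \leq \mathrm{Lip}(g)/r$, which blows up as $r \to 0$, so compactness of the rescaled family is not free. To get a uniform local bound on $g_r$ (equivalently, $|\nabla g(\mathbf{y}+\mathbf{w}) - \nabla g(\mathbf{y})| = O(\|\mathbf{w}\|_2)$ near $\mathbf{y}$) you must first restrict to points $\mathbf{y}$ where the upper density $\limsup_{r\to 0} |M|(B(\mathbf{y},r))/r^n$ is finite -- which holds a.e. by Besicovitch -- and then invoke a quantitative estimate for monotone maps converting this density bound into a pointwise Lipschitz-type bound on $\nabla g$ at $\mathbf{y}$. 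That estimate is precisely where the monotone structure is indispensable, and it should be stated and proved rather than absorbed into the word ``compactness.'' With that lemma in place, the rest of the blow-up argument (identification of every subsequential limit with the quadratic $\frac{1}{2}\mathbf{z}^T A(\mathbf{y})\mathbf{z}$ via the vanishing density of $M_s$, and passage from local uniform convergence of convex functions to convergence of their gradients) is sound and yields (\ref{eq:firstorder}) along $\mathbf{x} \in D$ as required.
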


The theorem is a direct consequence of Theorem 2.3 and Theorem 2.8 in 
\cite{Rockafellar:2000}. See, in addition, 
Chapter 13 -- and Theorem 13.51 in particular -- in
\cite{Rockafellar:1998} for similar results. Theorem \ref{thm:alexandrov} also follows from
Theorem 6.1 and Theorem 7.1 in \cite{Howard:1998}, which is a nice
self contained exposition of Rademacher's and Alexandrov's
theorems. 

In the light of
Definition \ref{dfn:dif}, Theorem
\ref{thm:alexandrov} says that for a convex function $g$, $\nabla g$ is defined
Lebesgue almost everywhere, and $\nabla g$ is differentiable in the
extended sense Lebesgue almost everywhere. Note, however, that the
differentiability points of $\nabla g$ can be a strict subset of 
its maximal domain of definition. 

\begin{proof}[Proof of Theorem \ref{thm:prdif}] The existence of  
a Borel measurable selection follows from general results in
\cite{Rockafellar:1998}. 
The set valued metric projection 
$\mathrm{Pr}$ is defined as 
$$\mathrm{Pr}(\mathbf{y}) = \argmin_{\mathbf{x} \in K} ||\mathbf{y} - \mathbf{x}||^2_2.$$ 
 As a set valued map,
$\mathrm{Pr}$ is outer semicontinuous by Example 5.23 in 
  \cite{Rockafellar:1998}, and combining Theorem 5.7 and Exercise 14.9
  in \cite{Rockafellar:1998} it is, still as a set valued map,
  closed-valued and Borel measurable. Corollary 14.6 in
  \cite{Rockafellar:1998} implies that $\mathrm{Pr}$ admits a
  Borel measurable selection, that is, there is a Borel measurable map 
$\mathrm{pr} : \mathbb{R}^n \to \mathbb{R}^n$ with
$$\mathrm{pr}(\mathbf{y}) \in \mathrm{Pr}(\mathbf{y})$$
for all $\mathbf{y} \in \mathbb{R}^n$. 

Alexandrov's Theorem can then be used to show that the selection of
$\mathrm{pr}(\mathbf{y})$ is unique and differentiable in the extended sense
for Lebesgue almost all $\mathbf{y}$. Theorem
  \ref{thm:alexandrov} holds for the convex function $\rho$. 
  For those $\mathbf{y}$ where (\ref{eq:firstorder}) holds, the
  differentiability of $\rho$ in $\mathbf{y}$ assures that $\mathrm{pr}(\mathbf{y}) =
  \nabla \rho(\mathbf{y})$ is   uniquely defined in $\mathbf{y}$ as well as differentiable in $\mathbf{y}$ in the
  sense of (\ref{eq:firstorder}). The domain $D$ on which $\mathrm{pr}$
  is uniquely defined thus satisfies that $D^c$ is a Lebesgue null
  set, and $\mathrm{pr} : D \mapsto \mathbb{R}^n$ satisfies
  (\ref{eq:firstorder}) for Lebesgue almost all $\mathbf{y}$. That is,
$$\mathrm{pr} (\mathbf{x}) = \mathrm{pr}(\mathbf{y}) + A(\mathbf{x}- \mathbf{y}) + o(||\mathbf{x}-\mathbf{y}||_2)$$
for $\mathbf{x} \in D$, and $\mathrm{pr}$ is differentiable in the
extended sense for Lebesgue almost all $\mathbf{y}$. By definition,
$$\partial_j \mathrm{pr}_i(\mathbf{y}) = A_{ij}$$
for those $\mathbf{y}$ where $\mathrm{pr}$ is differentiable in the extended sense, and since $A$
is positive semidefinite, $\partial_i \mathrm{pr}_i(\mathbf{y}) \geq 0$ for $i
= 1, \ldots, n$. 
\end{proof}

From hereon we assume, in accordance with Theorem \ref{thm:prdif}, that 
a choice of $\mathrm{pr}$ has been made on the set where
$\mathrm{pr}$ is not unique, such that $\mathrm{pr} : \mathbb{R}^n \to
\mathbb{R}^n$ is Borel measurable. 

We turn to the proof of Theorem \ref{thm:main}. The relation in
Theorem \ref{thm:main} between the degrees of freedom, $\mathrm{df}$, and the
Stein degrees of freedom, $\mathrm{df}_S$, will be established by partial
integration. However, to handle metric projections in full generality 
we have to turn to distributional formulations of differentiation. Partial
integration holds by definition for distributional differentiation. What we need
is to identify the distributional partial derivatives of the coordinates of the metric
projection. For this purpose, we define a signed Radon measure to be the difference
of two (positive) Radon measures. In this sense a signed Radon
measure need not have bounded total variation. Though we have to be
careful with such a definition to avoid the undefined ``$\infty -
\infty$", the difference of two Radon measures does give a well
defined linear functional on $C_c(\mathbb{R}^n)$.   

\begin{dfn} \label{dfn:lbv} A function $g \in L^1_{\text{loc}}(\mathbb{R}^n)$ is of locally bounded
  variation if there exist signed Radon measures $\mu_j$ for $j = 1,
  \ldots, n$ on $\mathbb{R}^n$ such that 
$$\int_{\mathbb{R}^n} g(\mathbf{y}) \partial_j \phi(\mathbf{y}) \, \mathrm{d}\mathbf{y} = - \int_{\mathbb{R}^n} \phi(\mathbf{y}) \mu_j(\mathrm{d}\mathbf{y})$$
for all $\phi \in C_c^{\infty}(\mathbb{R}^n)$.
\end{dfn}

Thus the functions of locally bounded variation are those
$L^1_{\text{loc}}$-functions whose distributional partial derivatives are
signed Radon measures. It is easily verified that Definition
\ref{dfn:lbv} is equivalent to other definitions in the literature,
e.g. the definition in Chapter 5 in \cite{Evans:1992}.

\begin{lemma} \label{lem:bv}
The functions $\mathrm{pr}_i$ for $i = 1, \ldots, n$ are of locally bounded
  variation. With $\mu_{ij}$ denoting the $j$'th distributional partial derivative of
  $\mathrm{pr}_i$ it holds that
\begin{itemize}
\item $\mu_{ij} = \mu_{ji}$,
\item $\sum_{i,j=1}^n x_ix_j \mu_{ij}$ is a positive measure for all $x \in \mathbb{R}^n$
\item and 
$$\int_{\mathbb{R}^n} \mathrm{pr}_i(\mathbf{y}) \partial_j \phi(\mathbf{y}) \, \mathrm{d}\mathbf{y} = - \int_{\mathbb{R}^n} \phi(\mathbf{y})
\mu_{ij}(\mathrm{d}\mathbf{y})$$
for all $\phi \in C^{\infty}(\mathbb{R}^n)$ with 
\begin{equation} \label{eq:bound}
\sup_{\mathbf{y} \in \mathbb{R}^n} (1 + ||\mathbf{y}||_2^2)^N
\max\left\{|\phi(\mathbf{y})|, |\partial_1 \phi(\mathbf{y})|, \ldots, |\partial_n \phi(\mathbf{y})|\right\} < \infty
\end{equation}
for all $N \in \mathbb{N}_0$. 
\end{itemize}
\end{lemma}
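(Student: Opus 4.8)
The plan is to exploit the identity $\mathrm{pr} = \nabla \rho$ (valid Lebesgue almost everywhere, as established in the proof of Theorem \ref{thm:prdif}), where $\rho$ is the convex function in (\ref{eq:rhodf}). Each coordinate $\mathrm{pr}_i$ then agrees a.e. with the first partial derivative $\partial_i \rho$ of a convex function, and since $\mathrm{pr}_i = \partial_i \rho$, the measure $\mu_{ij}$ — the $j$'th distributional derivative of $\mathrm{pr}_i$ — is exactly the second-order distributional derivative $\partial_j \partial_i \rho$. The whole lemma thus reduces to the classical fact that the distributional Hessian of a convex function is a matrix of signed Radon measures that is positive semidefinite, together with one growth estimate needed to enlarge the admissible class of test functions.

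First I would record that $\mathrm{pr}_i \in L^1_{\mathrm{loc}}(\mathbb{R}^n)$: the triangle-inequality bound $\|\mathrm{pr}(\mathbf{y})\|_2 \le \|\mathrm{pr}(\mathbf{0})\|_2 + 2\|\mathbf{y}\|_2$ gives linear growth, hence local integrability, so $\mathrm{pr}_i$ is a legitimate distribution. Since $\mathrm{pr}_i = \partial_i \rho$ a.e. and $\rho$ is convex, the map $y_i \mapsto \mathrm{pr}_i(\mathbf{y})$ is nondecreasing for almost every fixing of the remaining coordinates; by Fubini and the one-dimensional theory of monotone functions, $\mu_{ii} = \partial_i \mathrm{pr}_i = \partial_{ii}^2 \rho$ is therefore a positive Radon measure. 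Applying the same argument to $\rho$ along lines in an arbitrary direction $v$ shows that $\partial^2_{vv}\rho = \sum_{i,j} v_i v_j \partial_{ij}^2\rho$ is a positive Radon measure for every $v$. Taking $v = e_i + e_j$ and polarizing, $\mu_{ij} = \tfrac{1}{2}(\partial^2_{(e_i+e_j)(e_i+e_j)}\rho - \mu_{ii} - \mu_{jj})$ is a difference of positive Radon measures, hence a signed Radon measure; this establishes that each $\mathrm{pr}_i$ is of locally bounded variation. Symmetry $\mu_{ij} = \mu_{ji}$ is immediate since distributional derivatives commute, $\partial_j\partial_i\rho = \partial_i\partial_j\rho$, and the positivity of $\sum_{i,j} x_i x_j \mu_{ij} = \partial^2_{xx}\rho$ is the directional statement just proved with $v = x$. (Alternatively, all three properties can be read off by mollifying, $\rho_\epsilon = \rho * \eta_\epsilon$, using $D^2\rho_\epsilon \succeq 0$ pointwise and passing to the weak-$*$ limit.)

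The remaining and most delicate point is the third bullet: upgrading the integration-by-parts identity from $\phi \in C_c^\infty$ to all smooth $\phi$ satisfying the decay bound (\ref{eq:bound}) — this is precisely what makes the Gaussian density an admissible test function in Theorem \ref{thm:main}. The key auxiliary estimate is that $|\mu_{ij}|$ has at most polynomial mass growth: along each line in the $i$-direction the monotonicity of $\mathrm{pr}_i$ bounds the variation by the oscillation of $\mathrm{pr}_i$, which is $O(R)$ by linear growth, so $\mu_{ii}([-R,R]^n) = O(R^n)$; the inequality $2|\mu_{ij}| \le \mu_{ii} + \mu_{jj}$ (from $v = e_i \pm e_j$) then gives $|\mu_{ij}|([-R,R]^n) = O(R^n)$ as well. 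Given a $\phi$ satisfying (\ref{eq:bound}), I would approximate it by $\phi_R = \phi \chi_R$ with smooth cutoffs $\chi_R$ equal to $1$ on $B(\mathbf{0},R)$, supported in $B(\mathbf{0},2R)$, and with $\|\nabla\chi_R\|_\infty = O(1/R)$. For each $R$ the identity holds since $\phi_R \in C_c^\infty$, and I would let $R \to \infty$.

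The hard part will be controlling the error created by differentiating the cutoff. On the left side, $\int \mathrm{pr}_i\,\partial_j(\phi\chi_R)$ splits into $\int \mathrm{pr}_i\,(\partial_j\phi)\,\chi_R$, which converges to $\int \mathrm{pr}_i\,\partial_j\phi$ by dominated convergence (linear growth of $\mathrm{pr}_i$ against the rapid decay of $\partial_j\phi$), and the spurious term $\int \mathrm{pr}_i\,\phi\,\partial_j\chi_R$, supported in the annulus $R \le \|\mathbf{y}\|_2 \le 2R$. There the integrand is $O(R)\cdot|\phi|\cdot O(1/R) = O(|\phi|)$, and since $|\phi(\mathbf{y})| \le C_N(1+\|\mathbf{y}\|_2^2)^{-N}$ for arbitrarily large $N$ by (\ref{eq:bound}), the annulus integral is $O(R^{n-2N}) \to 0$. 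On the right side, $\int \phi_R\,\mu_{ij} \to \int \phi\,\mu_{ij}$ by dominated convergence against $|\mu_{ij}|$, using its polynomial mass growth together with the decay of $\phi$. Passing to the limit yields the stated formula. The main obstacle throughout is this interplay between the linear growth of $\mathrm{pr}$, the polynomial mass growth of the second-derivative measures, and the decay (\ref{eq:bound}); once these are in hand, the convex-analytic facts are standard.
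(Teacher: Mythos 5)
Your proposal is correct and follows essentially the same route as the paper: both obtain the $\mu_{ij}$ from the positive semidefinite distributional Hessian of the convex function $\rho$ (diagonal entries as positive Radon measures, off-diagonal entries by polarization along $e_i+e_j$ — your polarization constant is in fact the correct one), and both extend the integration-by-parts identity to rapidly decreasing test functions by a smooth cutoff argument. The only substantive difference is in the intermediate integrability estimate: you prove an explicit polynomial mass bound $|\mu_{ij}|(B(\mathbf{0},R)) = O(R^n)$ from the monotonicity/oscillation of $\mathrm{pr}_i$ along lines together with $2|\mu_{ij}| \le \mu_{ii}+\mu_{jj}$, whereas the paper instead deduces $\mu_{ij}$-integrability of $(1+\|\mathbf{y}\|_2^2)^{-N}$ directly from the partial-integration identity applied to truncated weights, via monotone and dominated convergence; both arguments are valid and serve the same purpose.
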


\begin{proof} First recall that 
$$|\mathrm{pr}_i(\mathbf{y})| \leq ||\mathrm{pr}(\mathbf{y})||_2 \leq ||\mathrm{pr}(0)||_2 + ||\mathbf{y}||_2,$$
which proves that $\mathrm{pr}_i$ is in $L^1_{\text{loc}}$. A standard
mollifier argument gives that for all $x \in \mathbb{R}^n$ 
$$\phi \mapsto \int_{\mathbb{R}^n} \rho(\mathbf{y}) \sum_{i,j=1}^n x_i x_j \partial_i \partial_j
\phi(\mathbf{y}) \, \mathrm{d}\mathbf{y}$$
is a positive linear functional on $C_c^{\infty}(\mathbb{R}^n)$
due to convexity of $\rho$. Riesz's
representation theorem gives the existence of a Radon
measure $\mu^{x}$ such that 
$$\int_{\mathbb{R}^n} \rho(\mathbf{y}) \sum_{i,j=1}^n x_i x_j \partial_i \partial_j
\phi(\mathbf{y}) \, \mathrm{d}\mathbf{y} = \int_{\mathbb{R}^n} \phi(\mathbf{y}) \mu^{x}(\mathrm{d}\mathbf{y}).$$
Taking $\mu_{ii} = \mu^{e_i}$ and 
$$\mu_{ij} = \mu^{(e_i + e_j)/{\sqrt{2}}} - \mu_{ii} - \mu_{jj}$$
for $i \neq j$ gives the existence of signed Radon measures $\mu_{ij}$, which by construction
fulfill the two first bullet points. Since $\rho$ is convex, it is
locally Lipschitz continuous, hence weakly differentiable with first
weak partial derivatives coinciding with the pointwise partial derivatives, $\mathrm{pr}_i(\mathbf{y})$, for Lebesgue almost all $\mathbf{y}$. Hence 
$$\int_{\mathbb{R}^n} \mathrm{pr}_i(\mathbf{y}) \partial_j \phi(\mathbf{y}) \, \mathrm{d}\mathbf{y} = - \int_{\mathbb{R}^n}
\rho(\mathbf{y}) \partial_i \partial_j \phi(\mathbf{y}) \, \mathrm{d} \mathbf{y} = - \int_{\mathbb{R}^n} \phi(\mathbf{y})
\, \mu_{ij}(\mathrm{d}\mathbf{y})$$
for all $\phi \in C^{\infty}_c(\mathbb{R}^n)$. 
We then prove that the
partial integration formula generalizes to all $\phi \in
C^{\infty}(\mathbb{R}^n)$ that fulfill (\ref{eq:bound}). To this end 
fix a positive function $\kappa \in C_c^{\infty}(\mathbb{R}^n)$ such that $\kappa(\mathbf{y}) = 1$ for
$||\mathbf{y}||_2 \leq 1$ . Define 
$$q_r(\mathbf{y}) = (1 + ||\mathbf{y}||^2_2)^{-N} \kappa(r \mathbf{y}),$$ 
then $q_r \in C_c^{\infty}(\mathbb{R}^n)$ and 
$$q_r(\mathbf{y}) \geq (1 + ||\mathbf{y}||^2_2)^{-N} 1(r ||\mathbf{y}||_2 \leq 1) \to (1 + ||\mathbf{y}||^2_2)^{-N}$$
for $r \to 0$. By monotone convergence 
$$\int_{\mathbb{R}^n} q_r(\mathbf{y}) \, \mu_{ij}(\mathrm{d}\mathbf{y}) \to \int_{\mathbb{R}^n} (1 +
||\mathbf{y}||^2_2)^{-N} \, \mu_{ij}(\mathrm{d}\mathbf{y})$$
for $r \to 0$. Moreover, $\kappa(r\mathbf{y}) = 1$ and $\partial_j \kappa(r\mathbf{y}) =
0$ for $||\mathbf{y}||_2 \leq 1/r$, hence 
$$\partial_j q_r(\mathbf{y}) \rightarrow \partial_j (1 + ||\mathbf{y}||_2^2)^{-N}$$
for $r \to 0$. Since 
$$|\mathrm{pr}_i(\mathbf{y}) \partial_j q_r(\mathbf{y})| \leq p(\mathbf{y}) (1 +
||\mathbf{y}||^2_2)^{-2N}$$
for some polynomial $p(\mathbf{y})$ of degree $N + 1$ independent of $r$ (for
$r \leq 1$, say), and since the upper bound is integrable w.r.t. the
$n$-dimensional Lebesgue measure for $N$ large enough,  
it follows by dominated convergence that for 
$N$ large enough
$$ \int_{\mathbb{R}^n} (1 + ||\mathbf{y}||_2^2)^{-N} \, \mu_{ij}(\mathrm{d}\mathbf{y}) = - \int_{\mathbb{R}^n} \mathrm{pr}_i(\mathbf{y}) \partial_j
(1 + ||\mathbf{y}||_2^2)^{-N} \, \mathrm{d} \mathbf{y}.$$
The function $\mathbf{y} \mapsto (1 + ||\mathbf{y}||_2^2)^{-N}$ is, in particular, 
$\mu_{ij}$-integrable. If $\phi \in C^{\infty}(\mathbb{R}^n)$ fulfills
(\ref{eq:bound}) we let $\phi_r(\mathbf{y}) = \phi(\mathbf{y}) \kappa(r\mathbf{y})$. Then $\phi_r
\in C_c^{\infty}(\mathbb{R}^n)$, $\phi_r(\mathbf{y}) \to \phi(\mathbf{y})$ for $r \to 0$, and 
$$\partial_j \phi_r(\mathbf{y}) = \partial_j \phi(\mathbf{y}) \kappa(r\mathbf{y}) +
\phi(\mathbf{y}) r \partial_j \kappa(r\mathbf{y}) \rightarrow \partial_j \phi(\mathbf{y})$$
for $r \to 0$. Moreover, for $r \leq 1$ there is a constant $C_N$ such
that 
$$|\mathrm{pr}_i(\mathbf{y}) \partial_j \phi_r(\mathbf{y})| \leq C_N (1 + ||\mathbf{y}||_2^2)^{-N + 2}$$
as well as
$$|\phi_r(\mathbf{y})| \leq C_N (1 + ||\mathbf{y}||_2^2)^{-N}$$
since $\phi$ fulfills (\ref{eq:bound}). Again by Lebesgue as well
as $\mu_{ij}$-integrability of the upper bound for $N$ large enough, it
follows from dominated convergence that 
\begin{eqnarray*}
\int_{\mathbb{R}^n} \mathrm{pr}_i(\mathbf{y}) \partial_j \phi(\mathbf{y}) \, \mathrm{d}\mathbf{y} & = & \lim_{r \to
  0} \int_{\mathbb{R}^n} \mathrm{pr}_i(\mathbf{y}) \partial_j \phi_r(\mathbf{y}) \, \mathrm{d}\mathbf{y}\\ 
& = & -  \lim_{r \to 0}  \int_{\mathbb{R}^n} \phi_r(\mathbf{y}) \mu_{ij}(\mathrm{d}\mathbf{y})\\
& = & - \int_{\mathbb{R}^n} \phi(\mathbf{y}) \mu_{ij}(\mathrm{d}\mathbf{y}).
\end{eqnarray*}
\end{proof}

The first part of the proof of Lemma \ref{lem:bv}, where we establish
the existence of the $\mu_{ij}$-measures, follows the proof of
Theorem 6.3.2 in \cite{Evans:1992}. In the remaining part we 
effectively prove that $\mathrm{pr}_i$ is a tempered
distribution. This actually follows directly from the 
polynomial bound on $\mathrm{pr}_i$ by Example 7.12(c) in
\cite{Rudin:1991}. However, we need a little more than just the fact
that the continuous linear functional
$$\phi \mapsto \int_{\mathbb{R}^n} \mathrm{pr}_i(\mathbf{y}) \partial_j \phi(\mathbf{y}) \, \mathrm{d}
\mathbf{y}$$
on the test functions $C_c^{\infty}(\mathbb{R}^n)$ extends to a
continuous linear functional on the Schwartz space $\mathcal{S}$ of
rapidly decreasing functions. We also need the explicit form of the
extension (the partial integration formula) as stated in Lemma
\ref{lem:bv}. 

To finally prove Theorem \ref{thm:main} we need to relate the distributional partial
derivatives $\mu_{ij}$ of $\mathrm{pr}_i$ to the pointwise partial
derivatives $\partial_{j} \mathrm{pr}_i$ defined Lebesgue almost
everywhere. To this end we need the concept of 
approximate differentiability. 

\begin{dfn} Let $m_n$ denote the $n$-dimensional Lebesgue measure and $B(\mathbf{y}, r)$
the $\ell_2$-ball with center $\mathbf{y}$ and radius $r$. A function $f : \mathbb{R}^n \to \mathbb{R}^n$ is
  approximately differentiable in $\mathbf{y}$ if there is a matrix $A$ such
  that for all $\epsilon > 0$ 
$$\frac{1}{m_n(B(\mathbf{y}, r))}m_n\left(\left\{ \mathbf{x} \in B(\mathbf{y}, r) \;\middle|\; \frac{||f(\mathbf{x}) - f(\mathbf{y})
        - A(\mathbf{x}-\mathbf{y})||}{||\mathbf{x}-\mathbf{y}||_2} \geq \epsilon \right\}\right) \rightarrow
    0$$
for $r \to 0$. 
\end{dfn}

By Theorem 6.1.3 in \cite{Evans:1992} the matrix $A$ is unique if $f$
is approximately differentiable in $\mathbf{y}$. It is called the approximate
derivative of $f$ in $\mathbf{y}$. Note that approximate differentiability of
$f$ in $\mathbf{y}$ is a local property, which only requires that $f$ is
defined Lebesgue almost everywhere in a neighborhood of $\mathbf{y}$.

\begin{lemma} \label{lem:extapp} If $f : D \to \mathbb{R}^n$ is differentiable
  in $\mathbf{y}$ in the extended sense then $f$ is approximately
  differentiable in $\mathbf{y}$ with the same derivative. 
\end{lemma}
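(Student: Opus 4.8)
The plan is to show that the matrix $A$ supplied by the extended-sense differentiability of $f$ at $\mathbf{y}$ is also the approximate derivative of $f$ at $\mathbf{y}$. The observation driving the argument is that extended differentiability is a genuine first-order estimate valid at \emph{every} point of the full-measure set $D \cap N$, which is considerably stronger than the purely measure-theoretic requirement in the definition of approximate differentiability. Concretely, I would prove that for each fixed $\epsilon > 0$ the ``bad set'' appearing in that definition has Lebesgue measure exactly zero once $r$ is small enough, so that its density at $\mathbf{y}$ is trivially zero.

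In detail, let $N$ and $A$ be as in Definition \ref{dfn:dif}, so that $D^c \cap N$ is a Lebesgue null set and $f(\mathbf{x}) = f(\mathbf{y}) + A(\mathbf{x}-\mathbf{y}) + o(||\mathbf{x}-\mathbf{y}||_2)$ for $\mathbf{x} \in D \cap N$. Fix $\epsilon > 0$ and set
$$E_\epsilon(r) = \left\{ \mathbf{x} \in B(\mathbf{y}, r) \;\middle|\; \frac{||f(\mathbf{x}) - f(\mathbf{y}) - A(\mathbf{x}-\mathbf{y})||}{||\mathbf{x}-\mathbf{y}||_2} \geq \epsilon \right\}.$$
The defining $o$-estimate furnishes a $\delta > 0$ with $B(\mathbf{y}, \delta) \subseteq N$ such that every $\mathbf{x} \in D \cap B(\mathbf{y}, \delta)$ with $\mathbf{x} \neq \mathbf{y}$ satisfies $||f(\mathbf{x}) - f(\mathbf{y}) - A(\mathbf{x}-\mathbf{y})|| < \epsilon ||\mathbf{x}-\mathbf{y}||_2$. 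Hence for $r < \delta$ the set $E_\epsilon(r)$ contains no point of $D$ other than possibly $\mathbf{y}$ itself, so that $E_\epsilon(r) \subseteq (D^c \cap B(\mathbf{y}, r)) \cup \{\mathbf{y}\}$, a Lebesgue null set. Consequently $m_n(E_\epsilon(r)) = 0$ for all $r < \delta$, the normalized measure $m_n(E_\epsilon(r))/m_n(B(\mathbf{y}, r))$ vanishes for such $r$, and a fortiori tends to $0$ as $r \to 0$. Since $\epsilon > 0$ was arbitrary, $f$ is approximately differentiable at $\mathbf{y}$ with derivative $A$.

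The argument is essentially immediate, so I do not anticipate a genuine obstacle; the only points requiring care are bookkeeping ones. First, approximate differentiability only demands that $f$ be defined Lebesgue almost everywhere near $\mathbf{y}$, which is exactly what the nullity of $D^c \cap N$ guarantees, so $E_\epsilon(r)$ is meaningful and the portion of $B(\mathbf{y}, r)$ lying outside $D$ is negligible. Second, one must read the $o(||\mathbf{x}-\mathbf{y}||_2)$ in Definition \ref{dfn:dif} correctly as a limit taken along $\mathbf{x} \to \mathbf{y}$ within $D \cap N$, which is precisely what licenses the choice of $\delta$ above. With these two interpretations in place, the inclusion $E_\epsilon(r) \subseteq (D^c \cap B(\mathbf{y}, r)) \cup \{\mathbf{y}\}$ and the conclusion follow with no further estimation.
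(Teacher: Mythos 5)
Your argument is correct and coincides with the paper's own proof: both show that for small enough $r$ the bad set in the definition of approximate differentiability is contained in the Lebesgue null set $D^c \cap B(\mathbf{y},r)$ (plus possibly $\{\mathbf{y}\}$), so its relative measure is identically zero. The bookkeeping points you flag (extending $f$ arbitrarily off $D$, reading the $o$-term along $D \cap N$) are exactly the ones the paper handles.
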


\begin{proof} Assume that $f$ is differentiable in $\mathbf{y}$ in the extended
  sense with derivative $A$. We can then for fixed $\epsilon > 0$ choose $r$ sufficiently small
  such that $D^c \cap B(\mathbf{y}, r)$ is a Lebesgue null set and 
$$\frac{||f(\mathbf{y}) - f(\mathbf{x}) - A(\mathbf{x} - \mathbf{y})||_2}{||\mathbf{x}- \mathbf{y}||_2} < \epsilon$$
for $\mathbf{x} \in D \cap B(\mathbf{y}, r)$. Choosing an arbitrary extension of
$f$ to $B(\mathbf{y}, r)$ we find that  
$$\left\{\mathbf{x} \in B(\mathbf{y},r)  \;\middle|\; \frac{||f(\mathbf{y}) - f(\mathbf{x}) - A(\mathbf{x} -
    \mathbf{y})||_2}{||\mathbf{x}- \mathbf{y}||_2} \geq \epsilon \right\} \subseteq D^c \cap B(\mathbf{y},r),$$
which implies that $f$ is approximately differentiable in $\mathbf{y}$ with
derivative $A$. 
\end{proof}

If $f: \mathbb{R}^n \to \mathbb{R}^n$ has coordinates of locally
bounded variation with corresponding distributional partial derivatives of $f_i$
denoted $\mu_{ij}$ for $j = 1, \ldots, n$ we have by Lebesgue's decomposition theorem that 
$$\mu_{ij} = h_{ij} \cdot m_n +  \nu_{ij}$$
with $\nu_{ij} \perp m_n$. We can now state (and subsequently use) a well known
but rather deep result on approximate differentiability of functions of locally bounded
variation. See Theorem 6.1.4 in \cite{Evans:1992}.  

\begin{theorem} \label{thm:app} If $f: \mathbb{R}^n \to \mathbb{R}^n$ has coordinates of locally
bounded variation then $f_i$ is approximately differentiable for
Lebesgue almost all $\mathbf{y}$ with derivative $(h_{i1}(\mathbf{y}), \ldots, h_{in}(\mathbf{y}))$.
\end{theorem}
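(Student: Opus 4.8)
The plan is to reduce to the scalar case and then combine a differentiation theorem for the singular part with the Poincaré inequality for functions of bounded variation. Since the statement concerns each coordinate $f_i$ separately, I fix $i$, write $g = f_i$, let $h_i(\mathbf{x}) = (h_{i1}(\mathbf{x}), \ldots, h_{in}(\mathbf{x}))$ be the density vector of the absolutely continuous part of the distributional gradient $Dg = (\mu_{i1}, \ldots, \mu_{in})$, and let $\nu = (\nu_{i1}, \ldots, \nu_{in})$ be its singular part. At a good point we will use the representative of $g$ equal to its Lebesgue value, which agrees with the given $L^1_{\mathrm{loc}}$-representative for almost all $\mathbf{y}$, so that $g(\mathbf{y})$ in the definition of approximate differentiability is unambiguous. (The full vector statement, with matrix $A = (h_{ij}(\mathbf{y}))$, then follows by a union bound over coordinates, but this is not needed here.)

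First I would isolate a set of good points of full Lebesgue measure. By the Lebesgue differentiation theorem almost every $\mathbf{y}$ is a Lebesgue point of $g$ and of each density $h_{ij}$; and since $\nu_{ij} \perp m_n$, the Besicovitch differentiation theorem gives that the singular parts have vanishing Lebesgue density, i.e. $|\nu_{ij}|(B(\mathbf{y},r))/m_n(B(\mathbf{y},r)) \to 0$ as $r \to 0$ for almost all $\mathbf{y}$. Fix such a good $\mathbf{y}$ and set $w(\mathbf{x}) = g(\mathbf{x}) - g(\mathbf{y}) - h_i(\mathbf{y}) \cdot (\mathbf{x} - \mathbf{y})$. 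Since $w$ is $g$ minus an affine function, it is again of bounded variation on each ball, with $Dw = (h_i - h_i(\mathbf{y}))\, m_n + \nu$. The good-point properties give that $\theta(r) := |Dw|(B(\mathbf{y},r))/m_n(B(\mathbf{y},r)) \to 0$, because the absolutely continuous part contributes $\frac{1}{m_n(B(\mathbf{y},r))}\int_{B(\mathbf{y},r)} \|h_i(\mathbf{x}) - h_i(\mathbf{y})\|_2 \, \mathrm{d}\mathbf{x} \to 0$ and the singular part contributes $|\nu|(B(\mathbf{y},r))/m_n(B(\mathbf{y},r)) \to 0$.

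The core estimate is the Poincaré inequality for functions of bounded variation, which bounds the mean oscillation of $w$ over $B(\mathbf{y},r)$ by $C(n)\, r\, \theta(r)$. To control the mean itself, I would telescope over dyadic radii: the same inequality bounds $|(w)_{\mathbf{y},\rho} - (w)_{\mathbf{y},\rho/2}|$ by $C\rho\,\theta(\rho)$, where $(w)_{\mathbf{y},\rho}$ denotes the average of $w$ over $B(\mathbf{y},\rho)$, and summing the geometric series together with $(g)_{\mathbf{y},\rho} \to g(\mathbf{y})$ at the Lebesgue point yields $|(w)_{\mathbf{y},r}| = o(r)$. Combining these gives the $L^1$-differentiability $\frac{1}{r\, m_n(B(\mathbf{y},r))}\int_{B(\mathbf{y},r)} |w(\mathbf{x})| \, \mathrm{d}\mathbf{x} \to 0$. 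To pass from this integral statement to approximate differentiability, which controls the relative measure of $\{\mathbf{x} \in B(\mathbf{y},r) : |w(\mathbf{x})| \geq \epsilon \|\mathbf{x}-\mathbf{y}\|_2\}$, I would decompose $B(\mathbf{y},r)$ into dyadic annuli $\{2^{-k-1} r \leq \|\mathbf{x}-\mathbf{y}\|_2 < 2^{-k} r\}$, on which $\|\mathbf{x}-\mathbf{y}\|_2 \asymp 2^{-k}r$, apply Chebyshev's inequality on each using the $L^1$-bound at scale $2^{-k}r$, and sum. The relative measure is then bounded by a constant times $\sum_k 2^{-kn}\eta(2^{-k}r)$, where $\eta(\rho) = \frac{1}{\rho\, m_n(B(\mathbf{y},\rho))}\int_{B(\mathbf{y},\rho)} |w| \, \mathrm{d}\mathbf{x} \to 0$, and this series tends to $0$ by dominated convergence since $\eta$ is bounded near $0$ and $\sum_k 2^{-kn} < \infty$.

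The deepest external input is the Poincaré inequality for bounded-variation functions, and the main obstacle in assembling the argument is the final passage from integral ($L^1$) differentiability to genuine approximate differentiability. The degenerate weight $\|\mathbf{x}-\mathbf{y}\|_2^{-1}$ near $\mathbf{y}$ is what forces the dyadic-annulus decomposition, and one must also secure the sharper $o(r)$ decay of the mean $(w)_{\mathbf{y},r}$ rather than mere convergence to zero; the telescoping over scales is exactly what delivers this decay from the vanishing of $\theta$.
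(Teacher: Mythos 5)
Your proposal is correct, but there is nothing in the paper to compare it against: the paper does not prove Theorem \ref{thm:app} at all, it simply imports it as ``a well known but rather deep result'' and cites Theorem 6.1.4 of Evans and Gariepy. What you have written is, in effect, a correct reconstruction of the standard proof behind that citation. The ingredients are all in order: Besicovitch differentiation to make the singular part $\nu$ have vanishing Lebesgue density at almost every point, Lebesgue points of $g$ and of the density vector $h_i$ to make $\theta(r)=|Dw|(B(\mathbf{y},r))/m_n(B(\mathbf{y},r))\to 0$ for the tilted function $w$, the Poincar\'e inequality for BV functions to convert this into control of the mean oscillation at scale $r$, the dyadic telescoping to upgrade $(w)_{\mathbf{y},r}\to 0$ to the needed $o(r)$ decay of the mean (this is the step most often glossed over, and you handle it correctly), and finally the Chebyshev-on-annuli argument to pass from $L^1$-differentiability to approximate differentiability, with the geometric factor $2^{-kn}$ making the sum converge by dominated convergence. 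The textbook proof runs the same way, except that it typically uses the Sobolev--Poincar\'e inequality to obtain an $L^{n/(n-1)}$-averaged differentiability statement; your plain $L^1$ version is weaker but entirely sufficient, precisely because the annulus decomposition absorbs the degenerate weight $\|\mathbf{x}-\mathbf{y}\|_2^{-1}$. The one point worth keeping explicit in a final write-up is the choice of representative: approximate differentiability at $\mathbf{y}$ depends on the value $g(\mathbf{y})$, and you correctly restrict to Lebesgue points where the given representative agrees with its Lebesgue value, which costs only a null set. In short, the proposal is a sound, essentially complete proof of a statement the paper leaves to the literature.
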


It is straightforward to see that if $f$ has coordinates of locally
bounded variation then it is also, as a function from $
\mathbb{R}^n$ to $\mathbb{R}^n$, approximately differentiable
for Lebesgue almost all $\mathbf{y}$ with derivative $(h_{ij}(\mathbf{y}))_{i,j=1, \ldots, n}$.

\begin{proof}[Proof of Theorem \ref{thm:main}] From Lemma \ref{lem:bv},
$\mathrm{pr}_i$ is of locally bounded variation with distributional partial
derivatives $\mu_{ij}$. Combining Theorem \ref{thm:prdif}, Lemma
\ref{lem:extapp} and Theorem \ref{thm:app} -- and using that the
approximate derivative is unique -- we conclude that 
$$\mu_{ij} = \partial_j \mathrm{pr}_{i} \cdot m_n +  \nu_{ij}$$
with  $\nu_{ij} \perp m_n$. 

Letting $\psi(\mathbf{y}; \boldsymbol{\xi}, \sigma^2)$ denote the density for the multivariate normal
distribution with mean $\boldsymbol{\xi}$ and covariance matrix $\sigma^2 I$ we have that 
$$\partial_j \psi(\mathbf{y}; \boldsymbol{\xi}, \sigma^2) = - \frac{(y_j - \xi_j)}{\sigma^2}
\psi(\mathbf{y}; \boldsymbol{\xi}, \sigma^2).$$
Since $\psi(\cdot; \boldsymbol{\xi}, \sigma^2) \in C^{\infty}(\mathbb{R}^n)$ fulfills (\ref{eq:bound}), Lemma
\ref{lem:bv} implies that 
{\small
\begin{eqnarray*}
\mathrm{cov}(Y_i, \mathrm{pr}_i(\mathbf{Y})) & = & \int_{\mathbb{R}^n} \mathrm{pr}_i(\mathbf{y}) (y_i - \xi_i) \psi(\mathbf{y}; \boldsymbol{\xi},
\sigma^2) \, \mathrm{d} \mathbf{y} \\
& = & - \sigma^2 \int_{\mathbb{R}^n} \mathrm{pr}_i(\mathbf{y}) \partial_i \psi(\mathbf{y}; \boldsymbol{\xi},
\sigma^2) \, \mathrm{d} \mathbf{y} \\
& = & \sigma^2 \int_{\mathbb{R}^n} \psi(\mathbf{y}; \boldsymbol{\xi}, \sigma^2) \, \, \mu_{ii}(\mathrm{d}\mathbf{y}) \\
& = & \sigma^2 \int_{\mathbb{R}^n} \psi(\mathbf{y}; \boldsymbol{\xi}, \sigma^2) \partial_i
\mathrm{pr}_{i}(\mathbf{y}) \, \mathrm{d} \mathbf{y} + \sigma^2 \int_{\mathbb{R}^n} \psi(\mathbf{y}; \boldsymbol{\xi}, \sigma^2) \,
\nu_{ii}(\mathrm{d}\mathbf{y}).
\end{eqnarray*}}

Theorem (\ref{thm:main}) follows by division with $\sigma^2$ and
summation over $i$, which gives that 
$$\nu = \sum_{i=1}^n \nu_{ii}.$$
\end{proof}

\begin{proof}[Proof of Proposition \ref{prop:supp}] The set
  $U = \mathbb{R}^n \backslash \overline{\mathrm{exo}(K)}$ is open.
  If $\mathrm{pr}_i$ is locally Lipschitz on $U$ Theorem
  4.2.5 in \cite{Evans:1992} 
  gives that $\mathrm{pr}_i$ is weakly differentiable,
 and the weak partial derivative  in the  $j$'th
  direction coincides with the 
  Lebesgue almost everywhere defined $\partial_j \mathrm{pr}_i$. That
  is, 
$$\int_{\mathbb{R}^n} \mathrm{pr}_i(\mathbf{y}) \partial_j \phi(\mathbf{y}) \, \mathrm{d} \mathbf{y} = - \int_{\mathbb{R}^n} \partial_j
\mathrm{pr}_i(\mathbf{y}) \phi(\mathbf{y}) \, \mathrm{d}\mathbf{y}$$
for all $\phi \in C_c^{\infty}(\mathrm{R}^n)$. It follows that 
$$\mu_{ij} = \partial_j \mathrm{pr}_i \cdot m_n,$$
and all the singular measures $\nu_{ij}$ are null measures.
\end{proof}

\section{Discussion} \label{sec:dis}

Our main result obtained in this paper is Theorem \ref{thm:main}. 
It characterises the size of $\mathrm{df} - \mathrm{df}_S$, which can
be interpreted as how much the non-convexity of $K$ affects the
degrees of freedom. From Theorem \ref{thm:main} we observe that
$\mathrm{df} - \mathrm{df}_S \geq 0$, and its magnitude is determined by how 
large $\psi(\mathbf{y}; \boldsymbol{\xi}, \sigma^2)$ is on the Lebesgue null set $N$ where 
the singular measure $\nu$ is concentrated. This is, in turn, determined
by the distance (scaled by $1/\sigma$) from $\boldsymbol{\xi}$ to points in
$N$ in combination with the distribution of the mass of the
measure $\nu$ on $N$. The singular measure depends only on $K$, and it
represents global geometric properties of $K$. 

Previous results on degrees of freedom in \cite{Tibshirani:2012},
\cite{Kato:2009}, \cite{Zou:2007} and \cite{Meyer:2000} all correspond
to $K$ being convex, and the resulting Lipschitz continuity of the
metric projection implies pointwise differentiability almost
everywhere by Rademacher's Theorem. To establish pointwise
differentiability almost everywhere of the metric projection onto any
closed set, we relied instead on the fact that it is the derivative
of a convex function. We then used Alexandrov's theorem for convex
functions to establish almost everywhere differentiability of the
metric projection. This is in principle well known in the mathematical
literature, and Asplund provided, for instance, only a brief argument
in \cite{Asplund:1973} for what is close to being Theorem
\ref{thm:prdif}. However, we needed to clarify in what sense the
metric projection is differentiable, and the precise relationship
between pointwise derivatives Lebesgue almost everywhere and
distributional derivatives for which partial integration applies. The
original formulation of Alexandrov's theorem was, in particular,
stated as the existence of a quadratic expansion of a convex funktion
$g$ for Lebesgue almost all $y$. This formulation does not require a
definition of differentiability of $\nabla g$ in $y$ in cases where
$\nabla g$ is not defined in a neighborhood of $y$. Consequently, the
conclusion cannot be formulated in terms of $\nabla g$ alone. The more
recent formulation of Alexandrov's theorem as in Theorem
\ref{thm:alexandrov} was useful, since it allowed us to formulate
Theorem \ref{thm:prdif} in terms of differentiability properties of
the metric projection itself rather than as a quadratic expansion of
$\rho$.

We gave three simple examples where analytic computations could shed
some light on the general results, and then we considered a more
serious application in Section \ref{sec:ode} on the estimation of
parameters in a $d$-dimensional linear ODE. This example served
several purposes. First we used it to test our algorithms for
computing the nonlinear $\ell_1$-regularized least squares estimator,
and we used it to test the divergence formula given in Theorem
\ref{thm:divlasso}. For the chosen model and parameter set and the
$\ell_1$-constrained estimator we concluded that
$\widehat{\mathrm{Risk}}(s)$ was, for all practical purposes,
unbiased, that it was useful for selection of $s$, and that the
selected model had a lower risk than e.g. the MLE.  The example also
showed that in this case the approximation $|\mathcal{A}| - 1$ to the
divergence was sufficiently accurate to be a computationally cheap
alternative to the formula from Theorem \ref{thm:divlasso}. When we
considered model searching instead, the risk estimate based on the
divergence became biased, and tended to select too complex models. Our
conclusion is that for the $\ell_1$-constrained estimator, the set $K$
may be non-convex, but this presents no problem for the estimation of
the degrees of freedom by the divergence. On the contrary, when 
$K$ is a union of models and we perform model searching, the
non-convexity of $K$ implies that the 
divergence underestimates the degrees of freedom considerably.

For practical applications we are faced with three challenges: We need
to compute the divergence to estimate $\mathrm{df}_S$; we need to
control, estimate or bound the difference $\mathrm{df} -
\mathrm{df}_S$; and we need to know or estimate $\sigma^2$. For the
latter, the typical solution is to estimate $\sigma^2$ in
an (approximately) unbiased way. In 
the ODE example an estimate of $\sigma^2$ can be based on the MLE
of $B$. For the computation of the divergence
we gave two formulas for parametrized models. We expect that similar formulas can be derived 
via implicit differentiation in cases where we have a parametrized
model, but with different restrictions on the parameters than we considered.
Alternatively, abstract results in Chapter 13 in
\cite{Rockafellar:1998} can be considered. The greatest challenge is
to control the difference $\mathrm{df} - \mathrm{df}_S$. Our
simulations showed an example where this difference was negligible 
as well as an example where it was not. In some simple cases we
were also able to compute the measure $\nu$, which can be used to compute the
difference. We do not expect that it will be an easy task to compute $\nu$
in many cases of practical interest, but we do expect that it will be
possible for best subset selection. We also expect that it will be
possible to make analytic progress on the further characterization of
$\nu$ and its support, e.g. when it has a density w.r.t. the
$(n-1)$-dimensional Hausdorff measure. It is, in addition, possible
to show that under a so-called prox-regularity assumption on
$\mathrm{pr}(\mathbf{y})$, the metric projection is Lipschitz in a 
neighborhood of $\mathrm{pr}(\mathbf{y})$, see \cite{Poliquin:2000}. 
Thus in this neighborhood $\nu$ is 0. This can be a path for bounding 
$\mathrm{df} - \mathrm{df}_S$ if $\boldsymbol{\xi}$ is close to
$K$. Even if it appears to be a challenging path, our representation of  
$\mathrm{df} - \mathrm{df}_S$ in terms of the singular measure $\nu$
does provide us with a novel way to achieve further progress.

\bibliographystyle{agsm}
\bibliography{../texbib-1}

\end{document}


\newtheorem{theorem}{Theorem}
\newtheorem{lemma}{Lemma}
\newtheorem{corollary}{Corollary}
\newtheorem{proposition}{Proposition}
\theoremstyle{definition}
\newtheorem{dfn}{Definition}
\newtheorem{ex}{Example}
\renewcommand{\phi}{\varphi}
\renewcommand{\epsilon}{\varepsilon}

\title[Divergence formulas and Algorithms]{Supplementary material: \\ Divergence formulas and Algorithms}
\author[N. R. Hansen]{Niels Richard Hansen} 
\address{Department of Mathematical Sciences,
University of Copenhagen,
Universitetsparken 5,
2100 Copenhagen \O,
Denmark}
\email[Corresponding author]{Niels.R.Hansen@math.ku.dk}

\author[A. Sokol]{Alexander Sokol}
\email{alexander@math.ku.dk}

\begin{abstract}

  This is supplementary material for the paper \emph{Degrees of
    freedom for nonlinear least squares estimation}. It contains the
  derivation of the divergence formulas, additional details related to
  the other proofs, technical details on the algorithms and
  implementations, and some additional simulation results.

\end{abstract}

\maketitle

\noindent

\section{Properties of the function $\rho$} In this section we give
the central but well known result that the metric projection onto
a closed set can be expressed as a subdifferential of a convex function. 

\begin{lemma} \label{lem:1} Assume that $K \subseteq \mathbb{R}^n$ is a nonempty
  and closed set. The function 
$$\rho(\mathbf{y}) = \sup_{\mathbf{x} \in K} \{ \mathbf{y}^T \mathbf{x} - ||\mathbf{x}||^2/2\}$$
is convex. With $\partial \rho$ denoting the subdifferential of $\rho$
then $\partial \rho(\mathbf{y})$ contains the set of points in $K$ closest to
$\mathbf{y}$. If $\rho$ is differentiable in $\mathbf{\mathbf{y}}$ with gradient $\nabla
\rho (\mathbf{y})$, then the metric projection of $\mathbf{y}$ onto
$K$ is unique, and $\mathrm{pr}(\mathbf{y}) = \nabla \rho (\mathbf{y})$.
\end{lemma}

\begin{proof} Since $\rho$ is the pointwise supremum of the affine (thus convex) functions
$$\mathbf{y} \mapsto  \mathbf{y}^T \mathbf{x} - ||\mathbf{x}||^2/2 = ||\mathbf{y}||^2/2 - ||\mathbf{y} - \mathbf{x}||^2/2,$$
it is convex, and 
$$\rho(\mathbf{y}) = ||\mathbf{y}||^2/2 - \inf_{\mathbf{x} \in K}  ||\mathbf{y} - \mathbf{x}||^2/2.$$
With 
$$\mathrm{Pr}(\mathbf{y}) = \argmin_{\mathbf{x} \in K}  ||\mathbf{y} - \mathbf{x}||^2$$ 
the nonempty set of points in $K$ closest to $\mathbf{y}$ it follows that 
$$\rho(\mathbf{y}) = \mathbf{y}^T \mathbf{x}  - ||\mathbf{x}||^2/2$$
for all $\mathbf{x} \in \mathrm{Pr}(\mathbf{y})$. For $\mathbf{x} \in
\mathrm{Pr}(\mathbf{y})$
\begin{eqnarray*}
\rho(\mathbf{y} + \mathbf{z}) & = & \sup_{\mathbf{x} \in K} \{
\mathbf{y}^T \mathbf{x} - ||\mathbf{x}||^2/2 + \mathbf{z}^T
\mathbf{x}\} \geq \mathbf{y}^T \mathbf{x}  - ||\mathbf{x}||^2/2 + 
\mathbf{z}^T\mathbf{x} \\
& = & \rho(\mathbf{y}) + \mathbf{z}^T \mathbf{x},
\end{eqnarray*}
which shows that $\mathrm{Pr}(\mathbf{y}) \subseteq \partial \rho(\mathbf{y})$ by
definition of the subdifferential. If $\rho$ is differentiable,
$$\partial \rho(\mathbf{y}) = \{ \mathrm{pr}(\mathbf{y}) \} = \mathrm{Pr}(\mathbf{y}),$$
and the last claim follows. 
\end{proof}

Let $D \subseteq \mathbb{R}^n$ denote the domain of $\nabla \rho$
on which $\rho$ is differentiable. The following observation is
useful. If $\mathbf{y} \in D$, if $\mathbf{y}_n \to \mathbf{y}$ 
and if $\mathbf{z}_n \in \mathrm{Pr}(\mathbf{y}_n)$ converges to $\mathbf{z}$ then 
$$  \rho(\mathbf{y} + \mathbf{x}) = \lim_{n \to \infty}  \rho(\mathbf{y}_n + \mathbf{x}) \geq \lim_{n \to
  \infty} \rho(\mathbf{y}_n) + \mathbf{x}^T \mathbf{z}_n \geq \rho(\mathbf{y}) + \mathbf{x}^T \mathbf{z},$$ 
which implies that $\mathbf{z} \in \partial \rho (\mathbf{y}) = \{\mathrm{pr}(\mathbf{y})\}$,
whence $\mathbf{z} = \mathrm{pr}(\mathbf{y})$. This proves a continuity property of the
metric projection: If $\mathbf{y} \in D$ and $U$ is a neighborhood of
$\mathrm{pr}(\mathbf{y})$ then $\{ \mathbf{z} \in \mathbb{R}^n \mid
\mathrm{Pr}(\mathbf{z}) \subseteq U \}$ contains a neighborhood of
$\mathbf{y}$. We will need this continuity property when deriving the
divergence formulas below.

\section{Proofs of the divergence formulas} The
formulas for computation of the divergence given in Section
3 of the paper will be proved using the implicit function
theorem to compute the divergence of $\zeta(\hat{\beta})$.  
To connect such a local result expressed in the $\beta$ parametrization
to the divergence of the globally defined metric projection
we will first establish that there is a
neighborhood of $\mathbf{y}$ where the (global) metric projection can be found
by minimizing $||\mathbf{z} - \zeta(\beta)||^2_2$ in a neighborhood of
$\hat{\beta}$. Note that $\mathrm{Pr}(\mathbf{z})$ denotes, as in the proof of
Lemma \ref{lem:1}, the set of metric projections of $\mathbf{z}$.  

\begin{lemma} \label{lem:local} If the regularity assumptions on $\zeta$ as stated in
  Section 3 hold, then for all neighborhoods $V$ of
  $\hat{\beta}$ there exists a neighborhood $N$ of $\mathbf{y}$ such that  
$$\mathrm{Pr}(\mathbf{z}) = \zeta(\argmin_{\beta \in V \cap \Theta} ||\mathbf{z} - \zeta(\beta)||^2_2)$$
for $\mathbf{z} \in N$.
\end{lemma}

\begin{proof} With $V$ a neighborhood of $\hat{\beta}$ there is, since
  $\zeta$ was assumed to be open at $\hat{\beta}$, a
  neighborhood $U$ of $\mathrm{pr}(\mathbf{y}) = \zeta(\hat{\beta})$ such that 
$$U \cap K \subseteq \zeta(V \cap \Theta).$$
By the continuity property of the metric projection there is a
neighborhood $N$ of $\mathbf{y}$ such that $\mathrm{Pr}(\mathbf{z}) \subseteq U$ for $\mathbf{z} \in
N$. By definition, $\mathrm{Pr}(\mathbf{z}) \subseteq K$, hence 
$$\mathrm{Pr}(\mathbf{z}) \subseteq \zeta(V \cap \Theta).$$
This proves first that $W = \argmin_{\beta \in V \cap \Theta} ||\mathbf{z} -
\zeta(\beta)||^2_2$ is not empty, and second that $\beta \in
W$ if and only if $\zeta(\beta) \in \mathrm{Pr}(\mathbf{z})$. 
\end{proof}

Below we use the implicit function
  theorem to show that for neighborhoods $N$ of $\mathbf{y}$ and $V$ of
  $\hat{\beta}$ there exists a $C^1$-map $\hat{\beta} : N \to V \cap \Theta$ such
  that $\zeta \circ \hat{\beta} : N \to K$ satisfies 
$$\{\zeta \circ \hat{\beta}(\mathbf{z})\} = \zeta(\argmin_{x \in V \cap \Theta}
||\mathbf{z} - \zeta(\beta)||^2_2).$$
It follows from Lemma \ref{lem:local} above that 
$$\mathrm{pr}(\mathbf{z}) = \zeta \circ \hat{\beta}(\mathbf{z})$$
for $\mathbf{z}$ in a neighborhood (contained in $N$) of $\mathbf{y}$.  
This ensures that 
\begin{equation} \label{eq:divid}
\nabla \cdot \mathrm{pr}(\mathbf{y}) = \nabla \cdot \zeta \circ \hat{\beta}(\mathbf{y}).
\end{equation}

Now recall the definitions of the $G$ and $J$ matrices,
\begin{equation} \label{eq:Gdfn}
G_{kl} = \sum_{i=1}^n \partial_k \zeta_i(\hat{\beta}) \partial_l
\zeta_i(\hat{\beta})
\end{equation}
and 
\begin{equation} \label{eq:Jdfn}
J_{kl} = G_{kl} - \sum_{i=1}^n (y_i -
\zeta_i(\hat{\beta})) \partial_k \partial_l \zeta_i(\hat{\beta}).
\end{equation}
The next lemma on differentiation of the
quadratic loss is a straightforward computation, and its proof is 
left out.

\begin{lemma} \label{lem:help} If $\zeta$ is $C^2$ in a neighborhood of
  $\beta$ then $f(\mathbf{z}, \beta) = \frac{1}{2} ||\mathbf{z} - \zeta(\beta)||_2^2$ is
  $C^2$ in a neighborhood of $(\mathbf{y}, \beta)$ with 
$$\partial_{z_i} \partial_k f(\mathbf{z}, \beta) = - \partial_k \zeta(\beta)$$
and 
$$\partial_k \partial_l f(\mathbf{z}, \beta) = J_{kl},$$
where $J_{kl}$ is given by (\ref{eq:Jdfn}). 
\end{lemma}

Note that in the notation above, $\partial_k$ refers to differentiation
w.r.t. to $\beta_k$ and $\partial_{z_i}$ refers to differentiation
w.r.t. $z_i$. 

\begin{proof}[Proof of  Theorem 3] With $f$ as in
  Lemma \ref{lem:help} the estimator $\hat{\beta}$ fulfills
$$\nabla_{\beta} f(\mathbf{y}, \hat{\beta}) = 0,$$
with the Jacobian of the map $\beta \mapsto \nabla_{\beta} f(\mathbf{y}, \beta) $ being $J$
by Lemma \ref{lem:help}. Since $J$ has full rank by assumption the
implicit function theorem implies that there is a continuously differentiable solution map
$\hat{\beta}(\mathbf{z})$, defined in a neighborhood of $\mathbf{y}$, such that 
$$\nabla_{\beta} f(\mathbf{z}, \hat{\beta}(\mathbf{z})) = 0.$$
Moreover, $D_\mathbf{z} \nabla_{\beta} f(\mathbf{y}, \hat{\beta}) = - D_{\beta}
\zeta(\hat{\beta})^T$ by Lemma \ref{lem:help}, which gives by implicit
differentiation that 
$$D_\mathbf{z} \hat{\beta}(\mathbf{y}) = J^{-1} D\zeta(\hat{\beta})^T.$$
Hence,
$$D_\mathbf{z} (\zeta \circ \hat{\beta})(\mathbf{y}) =  D\zeta(\hat{\beta}) J^{-1} D\zeta(\hat{\beta})^T.$$
It follows from (\ref{eq:divid}) that 
$$\nabla \cdot \mathrm{pr}(\mathbf{y}) = \mathrm{tr}(D\zeta(\hat{\beta}) J^{-1}
D\zeta(\hat{\beta})^T) = \mathrm{tr}(J^{-1} D\zeta(\hat{\beta})^T D\zeta(\hat{\beta})) = \mathrm{tr}(J^{-1} G),$$
since $G =  D\zeta(\hat{\beta})^T D\zeta(\hat{\beta})$ as defined by (\ref{eq:Gdfn}).
\end{proof}

\begin{proof}[Proof of  Theorem 4]  With $f$ as in
  Lemma \ref{lem:help} the estimator $\hat{\beta}$ fulfills, by
  assumption, 
$$\nabla_{\beta} f(\mathbf{y}, \hat{\beta}) =  \hat{\lambda} \gamma$$
for $\hat{\lambda} > 0$, $\gamma \in \mathbb{R}^p$, $\gamma_k = \omega_k
\mathrm{sign}(\hat{\beta}_k)$ if $\hat{\beta}_k \neq 0$ and 
$\gamma_k \in (-\omega_k,\omega_k)$ if $\hat{\beta}_k = 0$. Moreover,
as $\hat{\lambda} > 0$ it holds that $\sum_{k=1}^p \gamma_k \beta_k =
s.$ In the following we identify any $\mathbb{R}^{\mathcal{A}}$-vector 
denoted $\beta_{\mathcal{A}}$ with an $\mathbb{R}^p$ vector with 0's
in entries with indices not in $\mathcal{A}$. We introduce the map 
$$R(\mathbf{z}, \beta_{\mathcal{A}}, \lambda) = \left(\begin{array}{c} 
\nabla_{\beta_{\mathcal{A}}} f(\mathbf{z}, \beta_{\mathcal{A}}) - \lambda \gamma_{\mathcal{A}} \\
\sum_{i=1}^p \gamma_k \beta_{\mathcal{A},k} - s
\end{array} \right),
$$ 
and we observe that $R(\mathbf{y}, \hat{\beta}_{\mathcal{A}}, \hat{\lambda}) =
0$. The derivative of $R$ is found to be  
$$D_{\beta_{\mathcal{A}}, \lambda} R(\mathbf{y}, \hat{\beta}_{\mathcal{A}}, \hat{\lambda})  
 = \left(\begin{array}{cc} 
J_{\mathcal{A}, \mathcal{A}} &  \gamma_{\mathcal{A}} \\
\gamma_{\mathcal{A}}^T & 0 
\end{array} \right).$$
By the assumptions made on $J_{\mathcal{A}, \mathcal{A}}$ this matrix
is invertible with 
$$\left(\begin{array}{cc} 
J_{\mathcal{A}, \mathcal{A}} &  \gamma_{\mathcal{A}} \\
\gamma_{\mathcal{A}}^T & 0 
\end{array} \right)^{-1} = \left(\begin{array}{cc} 
(J_{\mathcal{A}, \mathcal{A}})^{-1} - \frac{(J_{\mathcal{A}, \mathcal{A}})^{-1}
\gamma_{\mathcal{A}} \gamma_{\mathcal{A}}^T (J_{\mathcal{A}, \mathcal{A}})^{-1}}{\gamma_{\mathcal{A}}^T (J_{\mathcal{A}, \mathcal{A}})^{-1} \gamma_{\mathcal{A}}}  & *\\
* & *
\end{array} \right).$$
It follows from the implicit function theorem that there is a
neighborhood of $\mathbf{y}$ in which there is a continuously differentiable
solution map $(\hat{\beta}_{\mathcal{A}}(\mathbf{z}), \hat{\lambda}(\mathbf{z}))$ that fulfills
$R(\mathbf{z}, \hat{\beta}_{\mathcal{A}}(\mathbf{z}), \hat{\lambda}(\mathbf{z})) = 0.$
By the $C^2$-assumption the solution map fulfills the second order
sufficient conditions in a neighborhood of $\mathbf{y}$, and $\hat{\beta}_{\mathcal{A}}(\mathbf{z})$ is a local 
solution to the constrained optimization problem. Since $D_\mathbf{z} \nabla_{\beta} f(\mathbf{y}, \hat{\beta}) = - D_{\beta}
\zeta(\hat{\beta})^T$ by Lemma \ref{lem:help}, we get by implicit
differentiation that 
$$D_\mathbf{z} \hat{\beta}_{\mathcal{A}} (\mathbf{y}) = \left((J_{\mathcal{A}, \mathcal{A}})^{-1} - \frac{(J_{\mathcal{A}, \mathcal{A}})^{-1}
\gamma_{\mathcal{A}} \gamma_{\mathcal{A}}^T (J_{\mathcal{A}, \mathcal{A}})^{-1}}{\gamma_{\mathcal{A}}^T
(J_{\mathcal{A}, \mathcal{A}})^{-1} \gamma_{\mathcal{A}}}\right)
(D\zeta(\hat{\beta})_{\cdot, \mathcal{A}})^T.$$
Since $(D\zeta(\hat{\beta})_{\cdot, \mathcal{A}})^T
D\zeta(\hat{\beta})_{\cdot,\mathcal{A}} = G_{\mathcal{A},\mathcal{A}}$ it follows as in the proof of Theorem
3 that 
\begin{eqnarray*}
\nabla \cdot \mathrm{pr}(\mathbf{y}) & = & \mathrm{tr}\left( (J_{\mathcal{A},
    \mathcal{A}})^{-1} G_{\mathcal{A},\mathcal{A}} - \frac{(J_{\mathcal{A}, \mathcal{A}})^{-1}
\gamma_{\mathcal{A}} \gamma_{\mathcal{A}}^T (J_{\mathcal{A}, \mathcal{A}})^{-1} G_{\mathcal{A},\mathcal{A}}}{\gamma_{\mathcal{A}}^T
(J_{\mathcal{A}, \mathcal{A}})^{-1} \gamma_{\mathcal{A}}} \right) \\
& = & \mathrm{tr}\left( (J_{\mathcal{A},
    \mathcal{A}})^{-1} G_{\mathcal{A},\mathcal{A}} \right)- 
\frac{\gamma_{\mathcal{A}}^T (J_{\mathcal{A}, \mathcal{A}})^{-1}
  G_{\mathcal{A},\mathcal{A}} (J_{\mathcal{A}, \mathcal{A}})^{-1}
\gamma_{\mathcal{A}}}{\gamma_{\mathcal{A}}^T (J_{\mathcal{A}, \mathcal{A}})^{-1} \gamma_{\mathcal{A}}}.
\end{eqnarray*}
\end{proof}

\subsection{Summary of previous results in the mathematical literature} 
There is an extensive mathematical literature on the uniqueness, and to
some extent differentiability, of the metric projection -- in
particular in the infinite dimensional context. Some of these results
are related to our derivations of the divergence formulas above. 
\cite{Haraux:1977} showed results on the directional differentiability of the
metric projection onto a closed convex set in a Hilbert space. He
showed, in particular, that in finite dimensions the projection onto 
a polytope is directionally differentiable in $\mathbf{y}$ for all $\mathbf{y}$ with the 
directional derivative being the projection onto 
$$(\mathbf{y} - \mathrm{pr}(\mathbf{y}))^{\perp} \cap T_{\mathrm{pr}(\mathbf{y})}$$
where $T_{\mathrm{pr}(\mathbf{y})}$ is the tangent cone, see \cite{Haraux:1977}
for the details. This is a derivative
if and only if it is linear, which happens if and only if
$\mathrm{pr}(\mathbf{y})$ is in the relative interior of the face $(\mathbf{y} -
\mathrm{pr}(\mathbf{y}))^{\perp} \cap K$. This is also the face of smallest
dimension containing $\mathrm{pr}(\mathbf{y})$.  If we
consider an $\ell_1$-ball with radius $s$, and the solution is unique
with $p(s)$ nonzero parameters, the corresponding face has dimension
$p(s) - 1$. This result was also found in  \cite{Kato:2009}. 

\cite{Haraux:1977} showed, in addition, in his Example 2 how to compute the
derivative when the boundary of the set is $C^2$. The derivative is a
form of regularized projection onto the tangent plane at
$\mathrm{pr}(\mathbf{y})$ -- the regularization being determined by the
curvatures. Recently, \cite{Kato:2009} derived similar results in the
context of shrinkage estimation. Abatzoglou derived results in
\cite{Abatzoglou:1978}, but without assuming convexity. These previous
results are all closely related to our Theorem 3, but we chose to
downplay the differential geometric content. Instead, we discussed in
the paper its relation to TIC.

More recent results on differentiability of the metric projection can be found
in \cite{Rockafellar:1998}. Their Corollary 13.43 gives an abstract
result for a specific point, $\mathbf{y}$, where $\mathrm{pr}(\mathbf{y})$ is prox-regular w.r.t. $\mathbf{y}
- \mathrm{pr}(\mathbf{y})$, and the result applies, in particular, when $K$ is
fully amenable (regular enough). The result by Haraux on projections onto
polytopes follows from this general result --  see Example 13.44 in \cite{Rockafellar:1998}.

\section{Algorithms and Implementations}

The general implementation that computes $\ell_1$-penalized nonlinear
least squares estimates, as well as the implementation
of computations specifically related to linear ODEs are available in the R package
\texttt{smde}.  See
\url{http://www.math.ku.dk/~richard/smde/} for information on obtaining the R
package and the R code used for the results reported in Section 4 in
the paper.

In the following sections we describe some of the technical results
behind our implementation. In particular, the computation of
derivatives related to the matrix exponential.

\subsection{Differentiation of the matrix
  exponential} \label{sec:diffexp}
The map $A \to e^A$ is well known to be $C^{\infty}$ as a map from
$\mathbb{M}(d,d)$ to $\mathbb{M}(d,d)$. Moreover, its first and second
partial derivatives can be efficiently computed. We summarize a few
useful results from the literature. 

We denote by $L(A, F)$ the directional derivative of the matrix exponential in $A \in \mathbb{M}(d,d)$ in the general
direction $F \in \mathbb{M}(d,d)$. It has the analytic integral representation
\begin{equation} \label{eq:frechet}
L(A, F) = \int_0^1 e^{(1-u)A} F e^{uA} \, \mathrm{d} u.
\end{equation}
See e.g. (10.15) in \cite{Higham:2008}. If we use $\partial_{kl}$ to
denote the partial derivative w.r.t. the $(k,l)$'th entry, and if 
$E_{kl}$ denotes the $(k,l)$'th unit matrix, we have $\partial_{kl}
e^A = L(A, E_{kl})$. This gives the identity
\begin{equation} \label{eq:directional}
\mathrm{tr}(\partial_{kl} e^A  M) = 
\mathrm{tr} \left( E_{kl} \int_0^1  e^{u A} M e^{(1-u) A} \, \mathrm{d}
  u \right) = L(A, M)_{l,k}.
\end{equation}
for any $M \in \mathbb{M}(d,d)$. We will use this formula in the
following section. Efficient algorithms exist for
computing $L(A, F)$ for general matrices. It holds, for instance, that 
$$\exp\left( \left[\begin{array}{cc} 
A & F \\
0 & A 
\end{array} \right] \right)
= \left[ \begin{array}{cc}
e^A & L(A, F) \\ 
0 & e^A 
\end{array} \right],$$
see (10.43) in \cite{Higham:2008}, so if we can efficiently compute
matrix exponentials, we can compute the derivative. The
\texttt{expmFrechet} function in the \texttt{expm} R package,
\cite{expm:2012}, implements a faster algorithm that avoids the
dimension doubling.  

For the second partial derivatives it follows from (\ref{eq:frechet}) that 
$$\partial_{hr} \partial_{kl} e^{A} = H(A, E_{hr}, E_{kl}) + H(A, E_{kl}, E_{hr}),$$
where 
$$ H(A, F, G) = \int_0^1 \int_0^u e^{(1-u) A} F e^{(u - s) A} G e^{s A}
\, \mathrm{d} s \mathrm{d} u.$$
The computation of these iterated integrals is based on Theorem 1
in \cite{loan:1978}, which implies that
{ \small
$$\exp\left( \left[\begin{array}{cccc} 
A & F & 0 \\ 
0 & A  & G  \\
0 & 0 & A \\
\end{array} \right] \right) = \left[ \begin{array}{cccc}
e^A & L(A, F) & H(A, F, G)  \\ 
0 & e^A & L(A, G) \\
0 & 0 & e^A \\ 
\end{array} \right].$$
}
From the integral representation of $H(A, F, G)$ we find that for $M \in \mathbb{M}(d,d)$
\begin{eqnarray} \nonumber 
\mathrm{tr}(\partial_{hr} \partial_{kl} e^A  M) & = &  \mathrm{tr}(E_{hr}
H(A, E_{kl}, M)) +  \mathrm{tr}(E_{kl} H(A, E_{hr}, M)) \\
& = & H(A, E_{kl}, M)_{r,h} +  H(A, E_{hr}, M)_{l,k}, 
\label{eq:seconddirectional}
\end{eqnarray}
which was used for the computation of the $J$ matrix that enters in the
formula in Theorem 4.  

\subsection{Coordinate descent algorithm and sufficient
  transformations}
To solve the optimization problem 
$$\min_{\beta} ||y - \zeta(\beta)||_2^2 + \lambda \sum_{k=1}^p
\omega_k |\beta_k|$$
for a decreasing sequence of $\lambda$'s we have implemented a plain coordinate wise descent algorithm based on a standard
Gauss-Newton-type quadratic approximation of the loss
function. That is, for given $\beta
\in \Theta$ we approximate the loss in the $k$'th direction as
\begin{eqnarray*}
||y - \zeta(\beta + \delta e_k)||_2^2 & \simeq & ||r(\beta) - \partial_k
\zeta(\beta) \delta||^2_2 \\
& = & ||r(\beta)||_2^2 - 2 \langle r(\beta),  \partial_k
\zeta(\beta) \rangle \delta + || \partial_k \zeta(\beta)||_2^2 \delta^2
\end{eqnarray*}
where $r(\beta) = y - \zeta(\beta)$. The coordinate wise penalized
quadratic optimization problem can be solved explicitly, and we then
iterate over the coordinates until convergence. We implemented two
versions of the algorithm. Algorithm I is a generic algorithm that
relies on two auxiliary functions for computing $\zeta(\beta)$ and
$D\zeta(\beta)$. Algorithm II is specific to linear ODE models. With $m$ observations solving a
$d$-dimensional linear ODE, the computation time for Algorithm I scales
linearly with $m$, but the computation of $e^{tB} x$ and $D e^{tB} x$ can be
implemented to take advantage of sparseness of $B$. 
Algorithm II relies, on the other hand, on the precomputation of three
sufficient statistics, being $d \times d$ matrices, as outlined
below. For dense matrices the current implementation of Algorithm II 
scales better with $d$, and after the precomputation of the sufficient statistics, all other computation
times are independent of $m$. However, Algorithm II cannot take the same 
advantage of a sparse $B$. 

Since the loss is generally not convex, the steps may not be descent
steps if the quadratic approximation is poor. We implemented  Armijo
backtracking as described in \cite{Tseng:2009} to ensure sufficient decrease and
hence convergence. 

As mentioned above, Algorithm II for the linear ODE
example relies on sufficient statistics for the computation of the
loss as well as the quadratic approximation. We give here a brief
derivation of the necessary formulas. 
On $\mathbb{M}(d,d)$ the inner product can be expressed in terms of the trace,
$$\langle A, B \rangle = \mathrm{tr}(A^T B).$$
The corresponding norm, often referred to as the Frobenius norm, is
the ordinary $2$-norm when matrices are identified with vectors in
$\mathbb{R}^{d^2}$. For the linear ODE example, $\zeta(B) = e^{t B} x$,
and 
$$||y -  \zeta(B)||_2^2 = \mathrm{tr}(y y^T) - 2 \mathrm{tr}(e^{t B} x y^T)
- \mathrm{tr} (e^{t B^T} e^{t B} x x^T),$$
which depends on the data through the three cross products $y y^T$, $x
y^T$ and $x x^T$ only. These are $d \times d$ sufficient
transformations. We also find that 
\begin{eqnarray*}
\langle r(B),  \partial_{kl} \zeta(B) \rangle & = & \mathrm{tr}( \partial_{kl}
e^{tB} x (y^T - x^T e^{t B^T})) \\ 
& = & \mathrm{tr}( \partial_{kl} e^{t B} (x y^T - x x^T e^{tB^T})) \\
& = & t L(tB, x y^T - x x^T e^{tB^T})_{l,k}
\end{eqnarray*}
by (\ref{eq:directional}). Consequently, the entire gradient of the quadratic
loss can be computed as $- 2 t L(tB, x y^T - x x^T e^{tB^T})^T$, which
amounts to computing a single directional derivative of the
exponential map. 

We also need to compute inner products of the derivatives,
$ \partial_{kl} \zeta(B)$, of $\xi$, and to this end we observe that
\begin{eqnarray*}
\langle  \partial_{kl} \zeta(B),  \partial_{hr} \zeta(B) \rangle & = &
\mathrm{tr}(x^T \left(\partial_{kl} e^{tB}\right)^T \partial_{hr} e^{tB} x) \\
& = & \mathrm{tr}(\left(\partial_{kl} e^{tB} \right)^T \partial_{hr} e^{tB} xx^T) \\
& = & t^2 L(t B^T,  L(tB, E_{hr}) xx^T)_{k,l}. 
\end{eqnarray*}
That is, an entire column (or row) of the matrix of inner products can
be computed by computing two directional derivatives of the
exponential map. 

\section{Penalized vs. constrained optimization}

As mentioned above, our algorithms solve the penalized optimization
problem for a given sequence of $\lambda$'s. A solution,
$\hat{\beta}_{\lambda}$, for a given $\lambda$ is also a solution
to the constrained optimization problem 
$$\min_{\beta \in \Theta_{s(\lambda)}} ||y - \zeta(\beta)||_2^2$$
where $s(\lambda) = \sum_{k=1}^p \omega_k |\hat{\beta}_{\lambda,k}|$ and 
$$\Theta_s = \left\{ \beta \;\middle|\; \sum_{k=1}^p \omega_k
  |\beta_k| \leq s \right\}.$$
The value of $s(\lambda)$ is decreasing in $\lambda$. Thus the
algorithm provides a sequence of solutions to the constrained problems
for increasing values of $s$. If the sequence of $\lambda$'s is fixed,
the sequence of $s$'s will, however, be random. This is a small
nuisance in the simulation study where we want to compute the degrees
of freedom repeatedly for a fixed $s$. In practice we have solved this
by linear interpolation to compute $\widehat{\mathrm{Risk}}(s)$ for a fixed set 
of constraints $s$.  

\section{Further details and results from the simulation study}

In the simulation study on estimation of linear ODE models, data were
generated using the following sparse $10 \times 10$ matrix:
{\small
$$B = \left(
\begin{array}{rrrrrrrrrr} 
 -1.0 & -1.0 & -0.9 & -0.8 & -0.7 & -0.6 & -0.4 & -0.3 & -0.2 & -0.1 \\ 
 1.0 & -1.0 & . & . & . & . & . & . & . & . \\ 
 0.9 & . & -1.0 & . & . & . & . & . & . & . \\ 
 0.8 & . & . & -1.0 & . & . & . & . & . & . \\ 
0.7 & . & . & . & -1.0 & . & . & . & . & . \\ 
 0.6 & . & . & . & . & -1.0 & . & . & . & . \\ 
 0.4 & . & . & . & . & . & -1.0 & . & . & . \\ 
 0.3 & . & . & . & . & . & . & -1.0 & . & . \\ 
 0.2 & . & . & . & . & . & . & . & -1.0 & . \\ 
 0.1 & . & . & . & . & . & . & . & . & -1.0 \\ 
\end{array}
\right)$$
}

The matrix exponential of $B$ is a dense matrix with most of the
entries of comparable size. 

{\tiny
$$e^B = \left(
\begin{array}{rrrrrrrrrr}
 -0.11 & -0.19 & -0.17 & -0.15 & -0.12 & -0.10 & -0.08 & -0.06 & -0.04 & -0.02 \\ 
 0.19 & 0.23 & -0.12 & -0.11 & -0.09 & -0.08 & -0.06 & -0.04 & -0.03 & -0.01 \\ 
 0.17 & -0.12 & 0.26 & -0.09 & -0.08 & -0.07 & -0.05 & -0.04 & -0.03 & -0.01 \\ 
 0.15 & -0.11 & -0.09 & 0.29 & -0.07 & -0.06 & -0.05 & -0.03 & -0.02 & -0.01 \\ 
0.12 & -0.09 & -0.08 & -0.07 & 0.31 & -0.05 & -0.04 & -0.03 & -0.02 & -0.01 \\ 
 0.10 & -0.08 & -0.07 & -0.06 & -0.05 & 0.33 & -0.03 & -0.02 & -0.02 & -0.01 \\ 
 0.08 & -0.06 & -0.05 & -0.05 & -0.04 & -0.03 & 0.34 & -0.02 & -0.01 & -0.01 \\ 
 0.06 & -0.04 & -0.04 & -0.03 & -0.03 & -0.02 & -0.02 & 0.35 & -0.01 & -0.00 \\ 
 0.04 & -0.03 & -0.03 & -0.02 & -0.02 & -0.02 & -0.01 & -0.01 & 0.36 & -0.00 \\ 
 0.02 & -0.01 & -0.01 & -0.01 & -0.01 & -0.01 & -0.01 & -0.00 &
  -0.00 & 0.37 \\ 
\end{array} \right)$$
}

\begin{figure}[t]
\begin{center}
\includegraphics[width=0.6\textwidth]{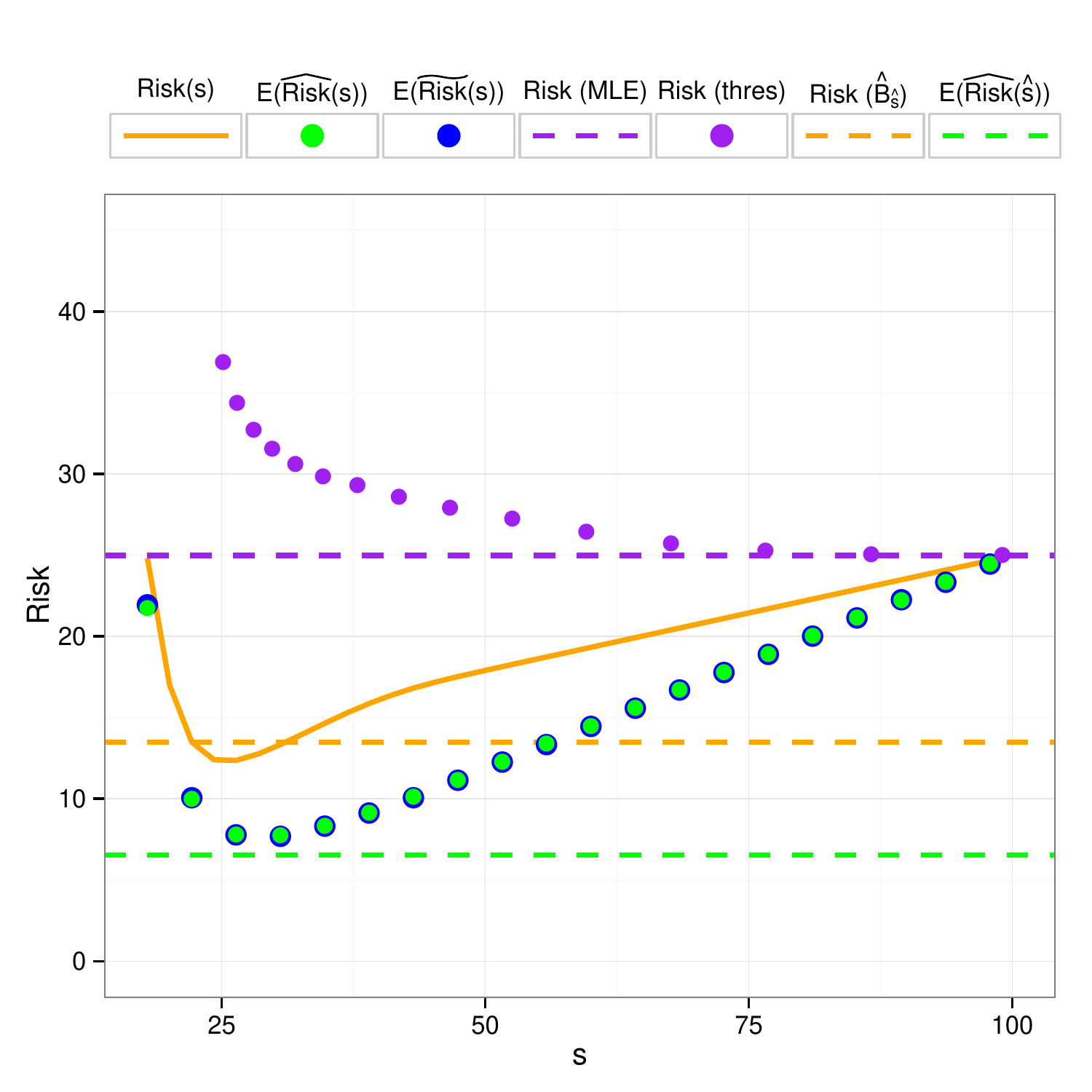} 
\end{center}
\caption{Risks for the $\ell_1$-constrained estimator with adaptive weights
  as a function of the constraint $s$ compared to the risk of
  the MLE and hard thresholding of the MLE. In addition, expected
  values of risk estimates. The 
risk estimates underestimated the true risk when adaptive weights were
used for the $\ell_1$-constrained estimator. \label{fig:riskAdap}}
\end{figure}

In addition to the results reported in the paper, 
Figure \ref{fig:riskAdap} shows the results for the $\ell_1$-constrained
estimator with adaptive weights. Using the divergence as an estimate
of degrees of freedom resulted in this case in negatively biased 
risk estimates. This is because the divergence does not account
for the data dependent weights. Despite of this, the data adaptive
choice of the constraint was close to the optimal choice. It is
notable that compared to using unit weights, the use of adaptive
weights decreased the risk further. The adaptive weights also resulted in
sparser estimates (41.7 nonzero entries on average) than when using unit weights (59.0
nonzero entries on average). Using $\hat{B}_{\hat{s}}$ to obtain 
a structural estimate of the nonzero entries the accuracy (fraction
of correctly estimated zero and nonzero entries) was
0.81 with adaptive weights compared to 0.65 with unit
weights. The forward stepwise model search gave 37.3 nonzero entries
on average and an accuracy of 0.83. Thus for structural estimation, the
model search was more accurate, though this comparison may not be entirely
fair. The model search started from the diagonal matrix mainly 
for numerical reasons, which gave it 10 correct nonzero entries as a
starting point. 

\begin{figure}
\begin{center}
  \includegraphics[width=0.9\textwidth]{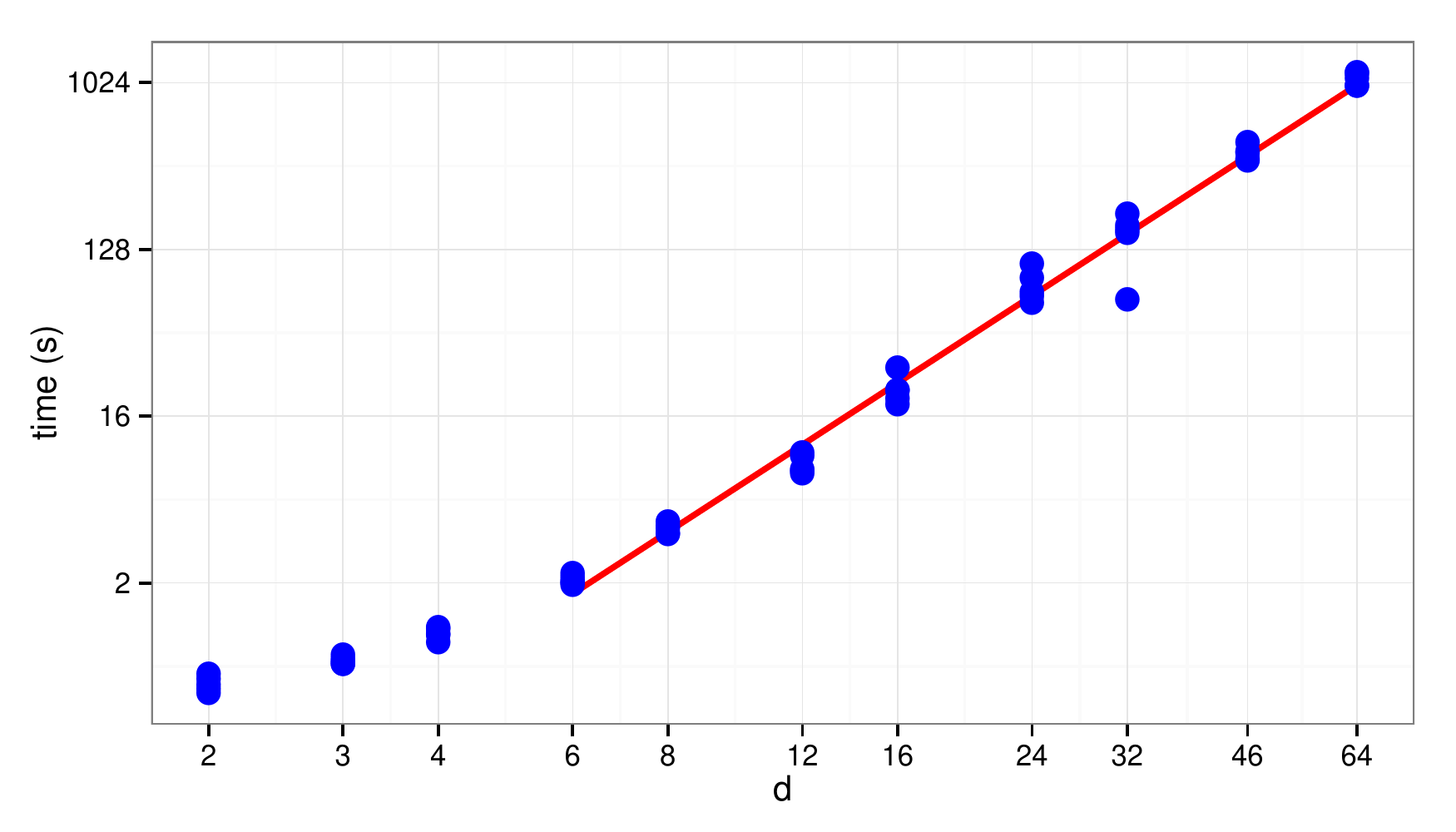} 
\end{center}
\caption{Computation time in seconds for Algorithm II as a function of the dimension
  $d$. The read line has slope 2.7. \label{fig:scaling}}
\end{figure}

We finally carried out a small benchmark simulation to investigate how
the current implementation of the coordinate descent algorithm for the
nonlinear least squares problem scales with the dimension of the
problem. It should be noted that there are many nobs to tweak to
improve computation times. The simulation results presented in the paper
with $d = 10$ were, for instance, carried out with a small relative tolerance of
around $10^{-8}$ for the convergence criterion. In this benchmark
study we used a relative tolerance of $10^{-4}$, which in our
experience only occasionally will result in convergence problems. It is also possible
to stop the algorithm when the model with the minimal estimated risk
is reached to avoid the most expensive part of the optimization where
many parameters are nonzero. We have not done that, but as
in the simulation study in the paper we computed the optimal solution for
40 precomputed values of the penalty parameter. Then there is the
specific choice of algorithm. In the benchmark we used Algorithm
II described above. 

Figure \ref{fig:scaling} shows the computation times for the
optimization as a function of the dimension $d$ 
for $d \in \{2, 3, 4, 6, 8, 12, 16, 24, 32, 46, 64\}$. Note that the
number of parameters is $p = d^2$, which for $d = 64$ gives $p = 4096$ parameters. For each value
of $d$ we made 5 replications. We see from Figure \ref{fig:scaling}
that the computation time scales roughly
like $d^3$. The bottleneck is the repeated computations of dense
matrix exponentials.

\bibliographystyle{agsm}
\bibliography{../texbib-1}